\tikzstyle{vertex}=[circle,draw=black,fill=black,inner sep=0,minimum size=3pt,text=white,font=\footnotesize]
\date{}
\title{\vspace{-0.8cm}Hasse diagrams with large chromatic number}
\author{
	Andrew Suk\thanks{UC San Diego.  \emph{e-mail}: \textbf{asuk@ucsd.edu}, Research supported by an NSF CAREER award and an Alfred Sloan Fellowship.}
	\and
	Istv\'{a}n Tomon\thanks{ETH Zurich. \emph{e-mail}: \textbf{istvan.tomon@math.ethz.ch}, Research supported by SNSF grant 200021-149111.}
}
\theoremstyle{plain}
\newtheorem{theorem}{Theorem}
\newtheorem{claim}[theorem]{Claim}
\newtheorem{lemma}[theorem]{Lemma}
\newtheorem{conjecture}[theorem]{Conjecture}
\newtheorem{problem}[theorem]{Problem}
\theoremstyle{definition}
\newcommand{\eps}{\varepsilon}
\begin{document}
	\maketitle
	
	\sloppy
	
	\begin{abstract}
		For every positive integer $n$, we construct a Hasse diagram with $n$ vertices and chromatic number $\Omega(n^{1/4})$, which significantly improves on the previously known best constructions of Hasse diagrams having chromatic number $\Theta(\log n)$. In addition, if we also require that our Hasse diagram has girth at least $k\geq 5$, we can achieve a chromatic number of at least $n^{\frac{1}{2k-3}+o(1)}$. %Our constructions are also semi-algebraic, which might be of independent interest.
		
		These results have the following surprising geometric consequence. They imply the existence of a family $\mathcal{C}$ of $n$ curves in the plane such that the disjointness graph $G$ of $\mathcal{C}$ is triangle-free (or have high girth), but the chromatic number of $G$ is polynomial in $n$. Again, the previously known best construction, due to Pach, Tardos and T\'oth, had only logarithmic chromatic number.
	\end{abstract}

	\section{Introduction}
	Let $G$ be a graph.  The independence number of $G$ is denoted by $\alpha(G)$, the clique number of $G$ is $\omega(G)$, and the chromatic number of $G$ is $\chi(G)$. Also, the  \emph{girth} of $G$ is the length of its shortest cycle.
	
	Given a triangle-free graph on $n$ vertices, how large can its chromatic number be? This question is closely related to the problem of determining the asymmetric Ramsey number $R(3,m)$, which is the maximum number of vertices in a triangle-free graph with no independent set of size $m$. It was proved by Ajtai, Koml\'os and Szemer\'edi \cite{AKSz80} that $R(3,m)=O(\frac{m^{2}}{\log m})$, and Kim \cite{K95} proved a matching lower bound with the help of probabilistic tools.  Finding explicit constructions of triangle-free graphs with no independent set of size $m$ has also received a lot of attention over the past several decades. The best known explicit constructions of such graphs have $\Theta(m^{3/2})$ vertices, see \cite{A94,CPR00,KPR10}. Moreover, explicit constructions for other asymmetric Ramsey numbers $R(s,m)$, where $s$ is viewed as a fixed constant, are considered by Alon and Pudl\'ak in \cite{AP01}.
	
	The aforementioned results imply that the chromatic number of a triangle-free graph on $n$ vertices is at most $n^{1/2+o(1)}$, and this bound is best possible. However, the question whether this can be improved for specific families of triangle-free graphs has been extensively studied, and one such family of particular interest is the family of Hasse diagrams.
	
	Hasse diagrams were introduced by Vogt~\cite{V95} at the end of the 19th century for concise representation of partial orders. Today, they are widely used in graph drawing algorithms. Given a partially ordered set $(P,<)$, its \emph{Hasse diagram} (or \emph{cover graph})  is the graph on vertex set $P$ in which $x$ and $y$ are joined by an edge if $x<y$ and there exists no $z\in P$ such that $x<z<y$. It is easy to see that if $G$ is a Hasse diagram, then $G$ is triangle-free. It is already not trivial that the chromatic number of Hasse diagrams can be arbitrarily large, but the following construction of Erd\H{o}s and Hajnal \cite{ErH64} shows just that.  The \emph{shift graph of order $N$} is the graph $G$ with vertex set $\{(i,j):1\leq i<j\leq N\}$ in which $(i,j)$ and $(i',j')$ are joined by an edge if $j=i'$. Then $G$ is the Hasse diagram of the poset defined as $(i,j)\leq (i',j')$ if $j\leq i'$; also it is a nice exercise to show that $\chi(G)=\lfloor\log_{2}N\rfloor$. Bollob\'as \cite{B77} proved that there exists Hasse diagrams (in particular, Hasse diagrams of lattices) with arbitrarily large girth  and chromatic number. Ne\v set\v ril and R\"odl \cite{NR79} gave an alternative proof of this result.  The construction of Bollob\'as, when optimized, implies the existence of Hasse diagrams with $n$ vertices, girth at least $k$ (where $k$ is any fixed integer) and chromatic number $\Theta(\frac{\log n}{\log\log n})$. Recently, under the same conditions, the bound on the chromatic number was improved to $\Omega(\log n)$ by Pach and Tomon \cite{PT19}, and they showed that this bound is optimal for the Hasse diagrams of so called uniquely generated posets. In \cite{KN91}, it is proved that there exists Hasse diagrams of 2-dimensional posets with arbitrarily large chromatic number, and in \cite{CPST09}, it is proved that the chromatic number of Hasse diagrams of 2-dimensional posets can be as large as $\Omega(\frac{\log n}{(\log\log n)^{2}})$, where $n$ is the number of vertices. Felsner et al. \cite{FGMR94} considered Hasse diagrams of interval orders and $N$-free posets of height $h$, and proved that the maximum chromatic number of such Hasse diagrams is logarithmic in $h$.
	
	All of the aforementioned constructions of $n$ vertex Hasse diagrams have chromatic number $O(\log n)$. Therefore, one might suspect that the  chromatic number of Hasse diagrams is at most logarithmic.  Our main result shows that this intuition is false, by constructing Hasse diagrams whose chromatic number is polynomial in the number of vertices. 
	
	\begin{theorem}\label{thm:mainthm}
		For every positive integer $n$, there exists a Hasse diagram with $n$ vertices and chromatic number $\Omega(n^{1/4})$.
	\end{theorem}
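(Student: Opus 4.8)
The plan is to reduce the problem to a statement about cover graphs, and then to build a suitable one from a dense triangle-free ``template'' graph.

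\textbf{Step 1: reformulation.} A graph $G$ is the Hasse diagram of some poset if and only if it admits an acyclic orientation that is \emph{shortcut-free}, meaning that whenever there is a directed path of length at least $2$ from $u$ to $v$, the pair $\{u,v\}$ is not an edge of $G$. Indeed, given such an orientation its transitive closure is a partial order whose cover graph is exactly $G$; conversely the covering relation of any poset yields such an orientation. Hence it suffices to exhibit, for infinitely many $n$, a graph $G$ on $n$ vertices, an acyclic shortcut-free orientation of it, and a lower bound $\chi(G)=\Omega(n^{1/4})$; once a good orientation is found, posets need never be mentioned again.

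\textbf{Step 2: the construction.} The starting point is a triangle-free graph $H$ on $m$ vertices with chromatic number as large as possible, i.e.\ $\chi(H)=\Omega(\sqrt{m}/\sqrt{\log m})$, as furnished by Kim's theorem (an explicit Ramsey-type construction would also do, at the cost of a worse constant in the exponent). One then wants to realize $H$ --- or a graph homomorphically equivalent to it, hence of the same chromatic number --- as the cover graph of a poset $P$ on roughly $m^{2}$ elements. The naive idea of subdividing the edges of $H$ is useless, since even a single subdivision of every edge makes the graph bipartite; the point is that (a copy of) $H$ must appear as an actual covering relation. The natural candidate, modelled on the shift graph, is to take as the ground set of $P$ the ordered pairs of vertices of $H$ (so $\Theta(m^{2})$ elements), equipped with a shift-type order adapted to the edge set of $H$: the covering pairs should be precisely the composable pairs $\big((a,b),(b,c)\big)$ whose relevant coordinates witness an edge of $H$, with the ``heights'' arranged so that no shortcuts are created. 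With $n=\Theta(m^{2})$ vertices and the chromatic number of the resulting cover graph at least $\chi(H)=\Omega(\sqrt{m/\log m})=\Omega(n^{1/4})$ after optimising the parameters, this gives the theorem. The quadratic blow-up from $H$ to $P$ is exactly what costs an extra square root and limits the exponent to $1/4$, rather than the $1/2$ available for arbitrary triangle-free graphs; for the girth-$k$ strengthening one instead starts from a template of girth $k$, whose sparsity forces a larger blow-up and hence the weaker exponent $\tfrac{1}{2k-3}+o(1)$.

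\textbf{Step 3: verification, and the main obstacle.} Two things must be checked. First, that the shift-type order really has the claimed covering relations and no others; this is an elementary but fiddly combinatorial verification, and it is where the exact placement of the ``heights'' is forced on us. Second --- and this is the crux --- one must lower-bound the chromatic number of the cover graph, equivalently show that its independence number is $O(n^{3/4})$ up to polylogarithmic factors. This amounts to proving that any sufficiently large family $F$ of ordered pairs of $V(H)$ contains two composable pairs $(a,b),(b,c)\in F$ whose coordinates span an edge of $H$, i.e.\ locating a copy of an edge of $H$ inside the ``link'' structure of $F$. I expect this to come from a double-counting / pigeonhole estimate that plays the edge density of $H$ (which is large precisely because $\chi(H)$ is) against the combinatorics of the shift order. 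Making all three quantities --- the size of $H$, the blow-up factor, and the resulting independence number --- fit together so that the final bound is exactly $\Omega(n^{1/4})$ is the delicate part of the argument.
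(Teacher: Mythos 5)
Your Step~1 reformulation is equivalent to the paper's Claim~\ref{claim:char} and is fine. The fatal problem is in Step~2: the shift-type construction you propose is structurally incapable of producing a polynomial chromatic number, so the independence bound you hope to prove in Step~3 is simply false. If the ground set of $P$ consists of ordered pairs from $V(H)$ and every covering pair has the composable form $\bigl((a,b),(b,c)\bigr)$, then the cover graph is a subgraph of the \emph{directed shift graph} $D(m)$ with vertex set $\{(a,b):a\neq b,\ a,b\in[m]\}$ and an edge joining $(a,b)$ to $(b,c)$ whenever $a,b,c$ are distinct. But $\chi\bigl(D(m)\bigr)\leq 2\lceil\log_2 m\rceil$: color $(a,b)$ by the pair $(i,a_i)$, where $i$ is the most significant bit in which $a$ and $b$ differ and $a_i$ is the $i$-th bit of $a$; if $(a,b)$ and $(b,c)$ received the same color then $a_i=b_i$ at a position $i$ where, by choice of $i$, $a_i\neq b_i$ --- a contradiction. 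Hence \emph{every} shift-type cover graph, regardless of the edge condition extracted from $H$ and regardless of how the ``heights'' are arranged, has chromatic number $O(\log m)=O(\log n)$ and therefore independence number $\Omega(m^2/\log m)=\Omega(n/\log n)\gg n^{3/4}$. This is precisely the logarithmic barrier the theorem is designed to break; no choice of Ramsey-type template $H$ can get around it, because the obstruction is the shift structure itself, not $H$.

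The paper's actual construction avoids templates of Ramsey graphs entirely. The vertices are the incidences $(p,l)$ of a point-line configuration $(P,\mathcal{L})$ in the plane, ordered by $x$-coordinate of the point and then by slope of the line, with $(p,l)$ adjacent to $(p',l')$ when $x(p)<x(p')$, $s(l)<s(l')$, and $p'\in l$. Absence of monotone cycles is a convexity argument (a monotone cycle would trace a convex polygonal arc whose last point cannot lie on the first line), and the independence number is at most $2N$ because an independent set, viewed inside the bipartite incidence graph, forbids a fixed $4$-vertex ordered path. Plugging in a Szemer\'edi--Trotter-tight configuration with $N$ points, $N$ lines, and $\Theta(N^{4/3})$ incidences gives $n=\Theta(N^{4/3})$ vertices and $\chi\geq n/(2N)=\Omega(N^{1/3})=\Omega(n^{1/4})$. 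The combinatorics that makes the bound polynomial is incidence geometry, not Ramsey theory, and the poset lives on incidences rather than on ordered pairs.
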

	
	Unlike in the triangle-free graph case, probabilistic tools are not suitable to construct Hasse diagrams with large chromatic number. Indeed, not only do Hasse diagrams avoid triangles, but they avoid an infinite family of ordered cycles (see Claim \ref{claim:char} for further details). However, surprisingly, the explicit constructions of Ramsey-graphs in \cite{CPR00,KPR10} are almost tailor-made for this problem. Roughly, in these constructions, they consider the so-called super-line graph of point-line incidence graphs over finite planes. Here, we show that if one considers the super-line graph of point-line incidence graphs over the real plane instead, one also gets a Hasse diagram.
	
	Furthermore, we prove that even if one requires a Hasse diagram with large girth, we can achieve polynomial chromatic number. Indeed, we get this almost immediately by randomly sparsifying our construction from Theorem \ref{thm:mainthm}. 
	
	\begin{theorem}\label{thm:girth}
		For every positive integer $k\geq 5$, there exists a Hasse diagram on $n$ vertices with girth at least $k$ and chromatic number at least $n^{\frac{1}{2k-3}+o(1)}$. 
	\end{theorem}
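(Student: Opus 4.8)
The plan is to run the alteration argument of Erd\H{o}s for producing graphs of large girth and large chromatic number, but feeding it the Hasse diagram from \cref{thm:mainthm} in place of a random graph. Two facts make this legitimate. First, an induced subgraph of a Hasse diagram is again a Hasse diagram: a graph is a cover graph precisely when it admits an acyclic orientation in which every arc is the \emph{unique} directed path between its endpoints (orient each covering pair upward; this is in essence the forbidden-ordered-cycle description behind \cref{claim:char}), and this property survives passing to any subset of vertices together with the induced orientation. Hence we may delete an arbitrary set of vertices from our construction and remain inside the class of Hasse diagrams. Second, the construction behind \cref{thm:mainthm} is geometric---it is the super-line graph of a point-line incidence graph over $\mathbb{R}^2$---and the planar incidence constraints (two points span at most one line, two lines meet in at most one point) keep the number of short cycles in it small; this scarcity of short cycles is exactly what an alteration argument consumes.

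So let $H$ be the Hasse diagram from \cref{thm:mainthm} on $N$ vertices. From its construction we use two bounds: $\alpha(H)=N^{3/4+o(1)}$ (this is the route to $\chi(H)=\Omega(N^{1/4})$, via $\chi\ge |V|/\alpha$) and, for each fixed $j\ge 4$, $c_j\le N^{\frac{2j+3}{8}+o(1)}$, where $c_j$ denotes the number of cycles of length $j$ in $H$ (and $c_3=0$). Now fix $k\ge 5$, put $p=N^{-\gamma}$ with $\gamma$ just above $\frac{2k-7}{8(k-2)}$ (the excess tending to $0$ as $N\to\infty$, which is where the $o(1)$ in the exponent comes from), and delete each vertex of $H$ independently with probability $1-p$; write $H_p$ for the resulting induced subgraph. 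Then $H_p$ is a Hasse diagram, $\mathbb{E}\,|V(H_p)|=pN$, the bound $\alpha(H_p)\le\alpha(H)=N^{3/4+o(1)}$ holds deterministically, and, since $\tfrac{2j+3}{8}-j\gamma<1-\gamma$ for every $4\le j\le k-1$ by the choice of $\gamma$,
\[
\mathbb{E}\Big[\#\{\text{cycles of length}<k\text{ in }H_p\}\Big]=\sum_{j=4}^{k-1}c_j\,p^{j}\le\sum_{j=4}^{k-1}N^{\frac{2j+3}{8}-j\gamma+o(1)}=o(pN).
\]
A routine first-moment and concentration argument then yields an outcome of the sampling with $|V(H_p)|\ge pN/2$ and at most $o(pN)$ cycles of length $<k$; deleting one vertex from each such cycle gives a graph $H'\subseteq H$ which is a Hasse diagram, has girth at least $k$, has $n:=|V(H')|=N^{1-\gamma+o(1)}$ vertices, and satisfies
\[
\chi(H')\ \ge\ \frac{|V(H')|}{\alpha(H')}\ \ge\ \frac{N^{1-\gamma+o(1)}}{N^{3/4+o(1)}}\ =\ N^{\frac14-\gamma+o(1)}.
\]
Since $\tfrac14-\gamma=\tfrac{3}{8(k-2)}+o(1)$ and $1-\gamma=\tfrac{3(2k-3)}{8(k-2)}+o(1)$, this reads $\chi(H')\ge N^{\frac14-\gamma+o(1)}=n^{\frac{1}{2k-3}+o(1)}$, which is the claim.

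The step I expect to be the real work is extracting the estimate $c_j\le N^{\frac{2j+3}{8}+o(1)}$ (and the precise shape of $\alpha(H)$) from the construction of \cref{thm:mainthm}. The naive bound $c_j\le N\Delta^{\,j-1}$ in terms of the maximum degree $\Delta$ is far too weak: it forces the cycle constraint to pin $\gamma$ at a value independent of $k$, which produces no useful lower bound on $\chi(H')$. One must instead use that $H$ is a super-line graph of a point-line incidence graph, so that a short cycle in $H$ unwinds into a short closed incidence walk on boundedly many points and lines, at which point the $\mathbb{R}^2$-incidence bounds enter and roughly halve the relevant exponent. The remaining points are bookkeeping: one checks that $pN\to\infty$ (so that the concentration of $|V(H_p)|$ is valid and the final bound is a genuine polynomial in $n$), that the various $N^{o(1)}$ factors, the small excess in $\gamma$, and the restriction of $N$ to the values for which \cref{thm:mainthm} is stated can all be absorbed into the $o(1)$ in the exponent, and that the binding cycle length among $4\le j\le k-1$ is the largest one, $j=k-1$---it is the presence of this length $k-1$, together with the exponents imposed by the geometric construction, that yields $\tfrac{1}{2k-3}$ in place of the $\tfrac{1}{k-1}$ one gets from a genuine random graph.
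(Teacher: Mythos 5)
Your plan---run Erd\H{o}s' deletion method directly on the Hasse diagram $H$ of Theorem~\ref{thm:mainthm}---is a genuinely different route from the paper's, and the parts you verify (induced subgraphs of Hasse diagrams are Hasse diagrams; $\alpha(H_p)\le\alpha(H)$ deterministically; the arithmetic turning $\gamma=\frac{2k-7}{8(k-2)}+o(1)$ into the exponent $\frac{1}{2k-3}$) are all correct. However, the step you yourself flag as ``the real work,'' the bound $c_j(H)\le N^{\frac{2j+3}{8}+o(1)}$, is not just unproven: it is false, and this is fatal.

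Already for $j=4$ the graph $G$ built from the standard configuration has $\Theta(N^{9/4})$ four-cycles, not $N^{11/8+o(1)}$. To see this, fix a point $p$ and pick four lines $l_1,l_2,l_3,l_4\in\mathcal{L}$ through $p$ with $s(l_1),s(l_3)<s(l_2),s(l_4)$, and pick $p_1\in l_1$, $p_3\in l_3$ with $x(p_1),x(p_3)<x(p)$. Then the four incidences $(p_1,l_1),(p,l_2),(p_3,l_3),(p,l_4)$ form a $4$-cycle in $G$: each consecutive pair is adjacent (the needed incidence is always ``$p\in l_1$'' or ``$p\in l_3$''), while the two diagonals are non-edges (two vertices sharing the point $p$ cannot be adjacent, and $p_1\notin l_3$, $p_3\notin l_1$ since $l_1\cap l_3=\{p\}$). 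There are $M\cdot\Theta(M^{1/3})^4\cdot\Theta(M^{1/3})^2=\Theta(M^3)=\Theta(N^{9/4})$ such configurations, where $M$ is the number of points and lines and $N=\Theta(M^{4/3})$ is $|V(G)|$. Plugging $c_4=\Theta(N^{9/4})$ into your constraint $c_4p^3\ll N$ forces $\gamma\ge 5/12>1/4$, at which point $|V(H')|\lesssim N^{1-\gamma}$ is already below the independence-number bound $N^{3/4}$ and the ratio $|V(H')|/\alpha(H')$ gives nothing.

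The structural reason is that a short cycle in $G$ does not ``unwind'' to a short cycle of the incidence bipartite graph $B$: in the example above the relevant incidences span a \emph{tree} in $B$ (a star centered at $p$), so no amount of girth in $B$ eliminates it, and no incidence bound controls its count. Your intuition that ``the $\mathbb{R}^2$-incidence bounds enter and roughly halve the relevant exponent'' does not apply to these tree-shaped cycles. This is precisely why the paper does not sparsify $G$ at all. Instead, Lemma~\ref{lemma:girth} sparsifies the underlying set of \emph{points and lines}---which simultaneously lowers the vertex degrees of $B$ and hence all the local parameters that feed into counts like the one above---and then Claim~\ref{claim:girth} is invoked to transfer girth from $B$ to $G$. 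The deletion argument is carried out in $B$, where cycle counting is tractable via the common-neighbor bound (Claim~\ref{claim:common} and Claim~\ref{claim:ncycles}), and $G$ is only built afterwards from the already-pruned configuration. Your version, deleting vertices of $G$, leaves the high-degree points of the configuration intact and therefore cannot avoid the abundant short cycles they generate.
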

	
	Let us remark that our constructions are also semi-algebraic with bounded complexity, which might be of independent interest.
	
	\subsection{Coloring geometric graphs}
	
	A family of graphs $\mathcal{G}$ is \emph{$\chi$-bounded} if there exists a function $f:\mathbb{N}\rightarrow \mathbb{N}$ such that $\chi(G)\leq f(\omega(G))$ for every $G\in \mathcal{G}$. Such a function $f$ is called \emph{$\chi$-bounding} for $\mathcal{G}$. The \emph{intersection graph} of a family of geometric objects $\mathcal{C}$ is the graph whose vertices correspond to the elements of $\mathcal{C}$, and two vertices are joined by an edge if the corresponding sets have a nonempty intersection. Also, the \emph{disjointness graph} of $\mathcal{C}$ is the complement of the intersection graph. A \emph{curve} is the image of a continuous function $\phi:[0,1]\rightarrow \mathbb{R}^{2}$, and a curve is \emph{$x$-monotone}, if every vertical line intersects it in at most 1 point.
	
	Results about the $\chi$-boundedness of intersection and disjointness graphs of geometric objects have a vast literature. One of the first such results is due to Asplund and Gr\"unbaum \cite{AG60}, who proved that the family of intersection graphs of axis-parallel rectangles in the plane is $\chi$-bounded. In \cite{Gy85,KK97}, it is proved that intersection graphs of chords of a circle are $\chi$-bounded, and recently  Davies and McCarty \cite{DM19} proved that $f(x)=O(x^{2})$ is $\chi$-bounding for this family. However, the family of intersection graphs of segments is not $\chi$-bounded: Pawlik et al.~\cite{PaKK14} gave a construction of $n$ segments in the plane whose intersection graph is triangle-free and has chromatic number $\Omega(\log\log n)$. On the other hand, it follows from a result of Rok and Walczak \cite{RW19} that $\chi(G)=O_{w(G)}(\log n)$ if $G$ is the intersection graph of $n$ segments, or more generally $x$-monotone curves, and Fox and Pach \cite{FP14} showed that $\chi(G)=(\log n)^{O(\log w(G))}$ if $G$ is the intersection graph of curves. In contrast, if we assume that $G$ is the intersection graph of curves, and the girth of $G$ is at least $5$, then $\chi(G)$ is bounded by a constant \cite{FP14}. 
	
	In the case of disjointness graphs, it follows from an argument of Larman, Matou\v{s}ek, Pach, and  T\"or\H{o}csik \cite{LMPT94} that the family of disjointness graphs of convex sets and $x$-monotone curves is $\chi$-bounded with $\chi$-bounding function $f(x)=x^{4}$. Recently, Pach and Tomon \cite{PT20} proved that in the case of $x$-monotone curves, this $\chi$-bounding function is optimal up to a constant factor. However, the family of disjointness graphs of curves is not $\chi$-bounded, Pach, Tardos and T\'oth \cite{PaTT17} constructed a family of $n$ curves, each being a polygonal line of 4 segments, whose disjointness graph is triangle-free and has chromatic number $\Theta(\log n)$. After the aforementioned results, it seems reasonable to conjecture that the chromatic number of triangle-free (or large girth) disjointness graphs of curves is also at most poly-logarithmic. Surprisingly, this is completely false.
	
	It turns out that Hasse diagrams and disjointness graphs are closely related. A curve is \emph{grounded} if it is contained in the nonnegative half-plane, and one of its endpoints lies on the $y$-axis. Middendorf and Pfeiffer \cite{MiP93} proved that $G$ is a Hasse-diagram if and only if there exists a family of grounded curves whose disjointness graph is triangle-free and isomorphic to $G$. But then Theorem \ref{thm:mainthm} and Theorem \ref{thm:girth} immediately imply the following result. 
	
	\begin{theorem}
		For every positive integer $n$, there exists a family of $n$ curves, whose disjointness graph $G$ is triangle-free and $\chi(G)\geq \Omega(n^{1/4})$. Also, for every $k\geq 5$, there exists a family of $n$ curves, whose disjointness graph $G$ has girth $k$ and $\chi(G)\geq n^{\frac{1}{2k-3}+o(1)}$.
	\end{theorem}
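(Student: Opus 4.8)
The plan is to derive this statement directly from Theorems~\ref{thm:mainthm} and~\ref{thm:girth} together with the characterization of Middendorf and Pfeiffer~\cite{MiP93}: a graph $G$ is isomorphic to a Hasse diagram if and only if there is a family of grounded curves whose disjointness graph is triangle-free and isomorphic to $G$. Since grounded curves are in particular curves, and since isomorphic graphs have the same chromatic number and the same girth, it suffices to feed the Hasse diagrams produced in the previous two theorems into this equivalence; no new geometric construction is needed.

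First I would handle the triangle-free case. Apply Theorem~\ref{thm:mainthm} to obtain a Hasse diagram $H$ on $n$ vertices with $\chi(H)=\Omega(n^{1/4})$. By the Middendorf--Pfeiffer characterization there is a family $\mathcal{C}$ of grounded curves, one per vertex of $H$, hence $n$ curves in total, whose disjointness graph $G$ is triangle-free and isomorphic to $H$; then $\chi(G)=\chi(H)=\Omega(n^{1/4})$, which is the first assertion. For the large-girth case, apply Theorem~\ref{thm:girth} with the given $k\geq 5$ to obtain a Hasse diagram $H$ on $n$ vertices with girth at least $k$ and $\chi(H)\geq n^{\frac{1}{2k-3}+o(1)}$. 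The same characterization yields a family of $n$ curves whose disjointness graph $G$ is isomorphic to $H$, and hence has the same chromatic number and the same girth. The only point requiring a word of care is getting girth exactly $k$ rather than merely at least $k$: this can be arranged by tuning the parameters of the random sparsification underlying Theorem~\ref{thm:girth}, or simply by reading the statement as ``girth at least $k$'', which is what is relevant for the coloring applications.

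I expect no genuine obstacle in this deduction — it is routine once Theorems~\ref{thm:mainthm} and~\ref{thm:girth} and the Middendorf--Pfeiffer equivalence are available. The real work lies upstream, namely in the constructions behind those two theorems and in verifying that the graphs built there are honestly Hasse diagrams, i.e.\ that they avoid not only triangles but the whole infinite family of ordered cycles obstructing cover graphs (cf.\ Claim~\ref{claim:char}).
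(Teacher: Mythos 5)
Your proposal is exactly the paper's argument: the authors explicitly state that the result follows immediately from Theorems~\ref{thm:mainthm} and~\ref{thm:girth} via the Middendorf--Pfeiffer characterization, and give no further proof. Your side remark about ``girth exactly $k$'' versus ``girth at least $k$'' is a reasonable reading caveat but does not change the argument.
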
 
	
	Hasse diagrams also appear in connection to another geometric problem. Given a point set $P$ in the plane, its \emph{Delaunay graph} with respect to axis-parallel rectangles is the graph whose vertices are $P$, and $p$ and $q$ are joined by an edge if there exists an axis-parallel rectangle whose intersection with $P$ is exactly $\{p,q\}$. Even et al. \cite{ELRS03} asked whether there exists a constant $c>0$ such that the Delaunay graph of any set of $n$ points contains an independent set of size $cn$. Chen et al. \cite{CPST09} proved that the answer is no by showing that a random set of $n$ points has no independent set of size larger than $O(n\frac{(\log\log n)^{2}}{\log n})$. On the other hand, it remains open whether every such Delaunay graph contains an independent set of size $n^{1-o(1)}$. However, given a set of points $P$, one can consider the partial ordering $<$ on $P$, where $(a,b)<(c,d)$ if $a<c$ and $b<d$. Then $(P,<)$ is a $2$-dimensional poset (in particular, every $2$-dimensional poset can be defined this way). If $G$ is the Hasse diagram of $(P,<)$, then $G$ is a subgraph (on the same vertex set) of the Delaunay graph of $P$. Therefore, if one could construct a 2-dimensional poset with $n$ vertices whose Hasse diagram has independence number at most $n^{1-\epsilon}$, it would imply the existence of an $n$-element point set in the plane whose Delaunay graph has no independent set larger than $n^{1-\epsilon}$.
	
	\begin{problem}
		Are there 2-dimensional posets on $n$ vertices whose Hasse diagram has independence number $n^{1-\eps}$?

	\end{problem}
	
	Our construction for Theorem \ref{thm:mainthm} has independence number $O(n^{3/4})$, which gives some hope that there might exist such 2-dimensional posets, but so far we were unable to construct one.
	
	\section{Constructing Hasse diagrams with large chromatic number}\label{sect:1}
	
	In this section, we outline our general construction and prove Theorem \ref{thm:mainthm}.
	
	An ordered graph is a pair $(G,\prec)$, where $G$ is a graph and $\prec$ is a total order on $V(G)$. A \emph{monotone cycle} of length $k$ in an ordered graph $(G,\prec)$ is a $k$-element subset of the vertices $v_{1}\prec\dots\prec v_{k}$ such that $v_{1}v_{k}$ and $v_{i}v_{i+1}$ are edges of $G$ for $i=1,\dots,k-1$. The following simple characterization of Hasse diagrams will come in handy.
	
	\begin{claim}\label{claim:char}
		$G$ is a Hasse diagram if and only if there exists an ordering $\prec$ on $V(G)$ such that $(G,\prec)$ does not contain a monotone cycle.
	\end{claim}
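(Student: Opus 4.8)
The plan is to prove both directions by working directly with the order structure. For the forward direction, suppose $G$ is the Hasse diagram of a poset $(P,<)$. I would take $\prec$ to be any linear extension of $<$ (i.e., a total order on $P=V(G)$ refining $<$; one exists by the standard topological-sort argument). Now suppose toward a contradiction that $(G,\prec)$ contains a monotone cycle $v_1 \prec \dots \prec v_k$ with edges $v_1v_k$ and $v_iv_{i+1}$ for $i=1,\dots,k-1$. Each edge $v_iv_{i+1}$ of $G$ is a cover relation, and since $\prec$ extends $<$ and $v_i \prec v_{i+1}$, it must be that $v_i \lessdot v_{i+1}$ (the cover goes "upward"). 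Chaining these, $v_1 < v_2 < \dots < v_k$, so $v_1 < v_k$. But $v_1 v_k$ is also an edge of $G$, hence a cover relation, so there is no element strictly between $v_1$ and $v_k$ — contradicting the existence of $v_2$ with $v_1 < v_2 < v_k$ (note $k \geq 3$ since a cycle has length at least $3$, and $\prec$ forces $v_2 \neq v_1, v_k$). This gives the contradiction.

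For the reverse direction, suppose $\prec$ is a total order on $V(G)$ such that $(G,\prec)$ has no monotone cycle. I would define a relation on $V(G)$ by declaring, for an edge $xy$ of $G$ with $x \prec y$, that $x$ is "below" $y$, and then take $<$ to be the transitive closure of this relation. The key point to check is that $<$ is a strict partial order, i.e., irreflexive: a cycle $x_0 < x_1 < \dots < x_m = x_0$ in the transitive closure would come from a closed walk in $G$ traversing edges in the $\prec$-increasing direction, which is impossible since $\prec$ is a total order (such a walk can never return to its start). Then I must verify that the Hasse diagram of $(P,<)$ is exactly $G$. Every edge $xy$ of $G$ (say $x \prec y$) satisfies $x < y$; I need it to be a cover, so suppose $x < z < y$ for some $z$. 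Unwinding the transitive closure gives $\prec$-increasing paths from $x$ to $z$ and from $z$ to $y$ in $G$; concatenating them yields a $\prec$-monotone path from $x$ to $y$ of length $\geq 2$, and together with the edge $xy$ this is a monotone cycle — contradiction. Conversely, if $xy$ is a cover relation of $(P,<)$, then $x < y$ came from a $\prec$-increasing path in $G$; if that path had length $\geq 2$ its first internal vertex would witness that $xy$ is not a cover, so the path is the single edge $xy \in E(G)$.

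The main obstacle, such as it is, is bookkeeping rather than depth: one must be careful that the "monotone cycle" in the definition allows the long edge $v_1 v_k$ to skip over all intermediate vertices, which is precisely what makes the argument work — a $\prec$-monotone path of length $\ell \geq 2$ from $x$ to $y$ plus the edge $xy$ forms a monotone cycle of length $\ell + 1 \geq 3$. I would also double-check the degenerate cases (loops or the possibility $v_1 = v_k$, ruled out by $\prec$ being a strict total order, and paths of length exactly $2$, which correspond to monotone cycles of length $3$, i.e., the triangle-freeness already noted in the text). No probabilistic or counting input is needed; the whole claim is a translation between the combinatorial language of cover graphs and the ordered-graph language used in the rest of the paper.
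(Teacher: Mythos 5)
Your proposal is correct and takes essentially the same route as the paper: for the forward direction you take a linear extension and argue a monotone cycle would force a chain $v_1 < \dots < v_k$ with $v_1v_k$ also a cover, a contradiction; for the reverse you define $<$ as the transitive closure of the $\prec$-oriented edges (equivalently, reachability along $\prec$-increasing paths, exactly the paper's definition) and check it is a strict partial order whose Hasse diagram is $G$. The paper leaves the verifications as "easy to check"; you fill them in correctly, and your observation that a $\prec$-increasing $x$--$z$--$y$ path concatenated with the edge $xy$ yields a monotone cycle is precisely the point.
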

	
	\begin{proof}
		Let $G$ be a graph and let  $\prec$ be an ordering on $V(G)$ such that $(G,\prec)$ does not contain a monotone cycle. Define the relation $<$ on $V(G)$ such that $x<y$ if there exists a sequence of vertices $x=v_{1}\prec v_{2}\prec \dots\prec v_{m}=y$ such that $v_{i}v_{i+1}$ is an edge for $i=1,\dots,m-1$. Then it is easy to check that $<$ is a partial ordering on $V(G)$, and if $(G,\prec)$ has no monotone cycles, then the Hasse diagram of $(V(G),<)$ is $G$.
		
		On the other hand, if $G$ is a Hasse diagram of some poset $(V(G),<)$, then let $\prec$ be any \emph{linear extension} of $<$, that is, a total order which satisfies $x\prec y$ if $x<y$. Then $(G,\prec)$ does not contain a monotone cycle.
	\end{proof}

	Let $P$ be a set of $N$ points, and $\mathcal{L}$ be a set of $N$ lines on the plane, where we specify $N$ later. We denote by $I(P,\mathcal{L})$ the set of incidences between $P$ and $\mathcal{L}$, that is, the set of pairs $(p,l)\in P\times \mathcal{L}$ such that $p\in l$. Also, let $x(p)$ denote the $x$-coordinate of a point $p$, and let $s(l)$ denote the slope of a line $l$. By applying a projection to $(P,\mathcal{L})$, we can assume that the points have different $x$-coordinates, and the lines have different slopes, without changing the incidence structure. 
	
	Define the ordered graph $(G,\prec)$ as follows. Let $I(P,\mathcal{L})$ be the vertex set of $G$, and define the ordering $\prec$ such that $(p,l)\prec (p',l')$ if $x(p)< x(p')$, or $p=p'$ and $s(l)< s(l')$. Now define the edge set of $G$ as follows. Join $(p,l)$ and $(p',l')$ by an edge if $x(p)< x(p')$, $s(l)<s(l')$ and $p'\in l$. The key observation is the following.
	
	\begin{claim}
		$G$ is a Hasse diagram.
	\end{claim}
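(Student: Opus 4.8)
The plan is to apply Claim~\ref{claim:char}: since we have an explicit total order $\prec$ on $V(G) = I(P,\mathcal{L})$, it suffices to show that $(G,\prec)$ contains no monotone cycle. So suppose for contradiction that $w_1 \prec w_2 \prec \dots \prec w_k$ is a monotone cycle, where $w_i = (p_i, l_i)$ and the edges are $w_1 w_k$ together with $w_i w_{i+1}$ for $i = 1, \dots, k-1$.

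First I would record what an edge means in terms of the underlying points and lines. If $w_i w_{i+1}$ is an edge with $w_i \prec w_{i+1}$, then by definition either $x(p_i) < x(p_{i+1})$ and $s(l_i) < s(l_{i+1})$ and $p_{i+1} \in l_i$; the remaining case $p_i = p_{i+1}$, $s(l_i) < s(l_{i+1})$ is impossible for an \emph{edge}, since edges require $x(p) < x(p')$ strictly, so in fact $x(p_i) < x(p_{i+1})$ always along the cycle. Hence $x(p_1) < x(p_2) < \dots < x(p_k)$ and $s(l_1) < s(l_2) < \dots < s(l_k)$; in particular $p_1 \ne p_k$ and $l_1 \ne l_k$, and $x(p_1) < x(p_k)$, $s(l_1) < s(l_k)$, so the "wrap-around" edge $w_1 w_k$ forces $p_k \in l_1$. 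The incidences along the path also give $p_{i+1} \in l_i$ for each $i = 1, \dots, k-1$.

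The heart of the argument is then a planar/geometric contradiction. Consider the line $l_1$. It contains $p_2$ (from the edge $w_1 w_2$) and it contains $p_k$ (from the edge $w_1 w_k$), and since $x(p_2) < x(p_k)$ these are distinct points, so $l_1$ is the unique line through $p_2$ and $p_k$. Now consider walking along the path $w_2, w_3, \dots, w_k$: I want to compare the line $l_1$ through $p_2$ and $p_k$ with the lines $l_2, \dots, l_{k-1}$, using the incidences $p_{i+1} \in l_i$ and the slope monotonicity $s(l_2) < \dots < s(l_k)$. Intuitively, the chain of points $p_2, p_3, \dots, p_k$ has increasing $x$-coordinates, and each consecutive pair $p_i, p_{i+1}$ "lies below/above" forces a convexity-type constraint once we bring in $s(l_i)$; the slope of the segment $p_i p_{i+1}$ should be squeezed between $s(l_{i-1})$ and $s(l_i)$ because $p_i, p_{i+1} \in l_{i}$ respectively... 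The cleanest way I would actually push this through: since $p_{i+1} \in l_i$ for all $i$ and $p_k \in l_1$, the points $p_2, \dots, p_k$ together with the lines realize a configuration where, reading the slopes in increasing order, the point $p_k$ must lie simultaneously on $l_1$ (small slope) and be reachable from $p_2$ only by a sequence of steps along lines of strictly increasing slope — and one shows by an extremal/convexity argument (e.g., tracking the line through $p_2$ and $p_j$ as $j$ increases from $3$ to $k$, whose slope is monotone in a way incompatible with $s(l_1)$ being the smallest) that $p_k$ cannot in fact lie on $l_1$. I expect this geometric squeezing step — correctly identifying the monotone quantity (a slope or a signed area) that the path forces to move in one direction while the closing edge forces it back — to be the main obstacle, and the place where the use of the \emph{real} plane (ordered field, betweenness) as opposed to a finite plane is essential.

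Once the geometric contradiction is established, the proof is complete: $(G,\prec)$ has no monotone cycle, so by Claim~\ref{claim:char}, $G$ is a Hasse diagram. I would also double-check the two degenerate sub-cases that the order $\prec$ allows but edges forbid (vertices sharing a point or sharing a line) really cannot occur consecutively on a monotone cycle, so that the strict inequalities $x(p_1) < \dots < x(p_k)$ and $s(l_1) < \dots < s(l_k)$ are genuinely strict, which is what the squeezing argument needs.
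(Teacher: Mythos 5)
Your reduction to Claim~\ref{claim:char} and your bookkeeping (strict increase of $x(p_i)$ and $s(l_i)$ along the monotone cycle, the incidences $p_{i+1}\in l_i$, and $p_k\in l_1$ from the closing edge) are all correct and match the paper's setup exactly. The issue is that you stop precisely at the step that constitutes the proof: you describe several candidate ``monotone quantities'' and say you expect the geometric squeezing to be the main obstacle, but you do not actually carry out the contradiction. That is a genuine gap --- as written, the proposal asserts that some convexity argument should work without exhibiting one.

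The missing step is short. Induct on $j\geq 2$ to show that $p_j$ lies on or strictly above the line $l_1$, with strict inequality for $j\geq 3$. The base case is $p_2\in l_1$. For the step, $p_{j+1}$ lies on $l_j$, which passes through $p_j$ and has slope $s(l_j)>s(l_1)$; since $x(p_{j+1})>x(p_j)$ and $p_j$ is on or above $l_1$, the point $p_{j+1}$ is strictly above $l_1$. Hence $p_k\notin l_1$, contradicting the closing edge. This is exactly the ``convex chain'' picture the paper invokes (Figure~\ref{figure:path}). So your approach is the paper's approach; you just need to replace the paragraph of speculation with this two-line induction to have a complete proof.
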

	
	\begin{proof}
		By Claim \ref{claim:char}, it is enough to show that $(G,\prec)$ does not contain a monotone cycle. Indeed, suppose that $(p_{1},l_{1}),\dots,(p_{k},l_{k})$ are the vertices of a monotone cycle. Then
		\begin{itemize}
			\item $x(p_{1})<\dots< x(p_{k})$,
			\item $s(l_{1})<\dots< s(l_{k})$, 
			\item $p_{1}\in l_{1}$, and  $l_{i-1}\cap l_{i}=\{p_{i}\}$ for $i=2,\dots,k$.
		\end{itemize}   
		
		These properties imply that $p_{1},\dots,p_{k}$ and $l_{1},\dots,l_{k}$ form a convex sequence, see Figure \ref{figure:path} for an illustration.  But then $p_{k}$ is strictly above the line $l_{1}$, so $p_{k}\not\in l_{1}$, contradicting that $(p_{1},l_{1})$ and $(p_{k},l_{k})$ are joined by an edge.  
	\end{proof}
	
	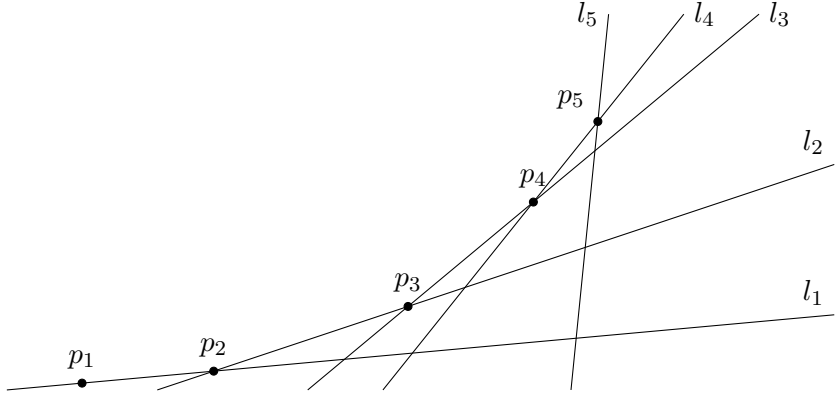
\begin{figure}
		\centering
		\begin{tikzpicture}
		
		%\node[vertex] at (0,0) {} ; \node at (0,-0.3) {$p_{1}$} ; 
		
		\coordinate (A) at (-1,0); \coordinate[label=above left:$l_{1}$] (A1) at (10,1); \draw (A) -- (A1) ;
		\coordinate (B) at (1,0);  \coordinate[label=above left:$l_{2}$] (B1) at (10,3); \draw (B) -- (B1) ;
		\coordinate (C) at (3,0);  \coordinate[label=right:$l_{3}$] (C1) at (9,5); \draw (C) -- (C1) ;
		\coordinate (D) at (4,0);  \coordinate[label=right:$l_{4}$] (D1) at (8,5); \draw (D) -- (D1) ;
		\coordinate (E) at (6.5,0); \coordinate[label=left:$l_{5}$] (E1) at (7,5); \draw (E) -- (E1) ;
		
		\node[vertex,label=above:$p_{1}$] (p1) at (0,1/11) {};
		\node[vertex,label=above:$p_{2}$] (p2) at (intersection of A--A1 and B--B1) {};
		\node[vertex,label=above:$p_{3}$] (p3) at (intersection of B--B1 and C--C1) {};
		\node[vertex,label=above:$p_{4}$] (p4) at (intersection of C--C1 and D--D1) {};
		\node[vertex,label=above left:$p_{5}$] (p5) at (intersection of D--D1 and E--E1) {};

		\end{tikzpicture}
		\caption{A monotone path $(p_{1},l_{1}),\dots,(p_{5},l_{5})$.}
		\label{figure:path}
	\end{figure}
	
	Next, we show that $G$ contains no large independent sets. 
	
	\begin{claim}\label{claim:ind}
		$G$ has no independent set larger than $2N$.
	\end{claim}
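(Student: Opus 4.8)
The plan is to bound $|S|$ by combining two elementary injections. Recall that $S\subseteq I(P,\mathcal{L})$ being independent means exactly that there are no $(p,l),(p',l')\in S$ with $x(p)<x(p')$, $s(l)<s(l')$ and $p'\in l$; throughout I use the standing assumption that the points have pairwise distinct $x$-coordinates and the lines pairwise distinct slopes, so that the extremal choices below are unique.

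First I would classify the incidences of $S$. Call $(p,l)\in S$ \emph{line-maximal} if $x(p)$ is largest among all $q$ with $(q,l)\in S$, and \emph{point-maximal} if $s(l)$ is largest among all $l''$ with $(p,l'')\in S$. Each line carries at most one line-maximal incidence of $S$ and each point at most one point-maximal incidence of $S$, so there are at most $|\mathcal{L}|=N$ of the former and at most $|P|=N$ of the latter. Hence it is enough to inject the non-line-maximal incidences of $S$ into the point-maximal incidences of $S$.

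To build this injection, take a non-line-maximal $(p,l)\in S$ and let $p^{+}$ be the point of $l$ with the smallest $x$-coordinate exceeding $x(p)$ subject to $(p^{+},l)\in S$ (which exists precisely because $(p,l)$ is not line-maximal). The crucial observation is that $(p^{+},l)$ is point-maximal: if some $(p^{+},l')\in S$ had $s(l')>s(l)$, then since $x(p)<x(p^{+})$, $s(l)<s(l')$ and $p^{+}\in l$, the pair $(p,l),(p^{+},l')$ would be an edge of $G$, contradicting that $S$ is independent. So I can send $(p,l)$ to $(p^{+},l)$, and the image is a point-maximal incidence. This map is injective: if $(p_{1},l_{1})$ and $(p_{2},l_{2})$ both map to $(p^{+},l)$ then $l_{1}=l_{2}=l$, and $p_{1}$ and $p_{2}$ are both the point immediately preceding $p^{+}$ among the points $q$ with $(q,l)\in S$, hence equal. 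Combining the three bounds gives $|S|\le 2N$.

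The one step with real content is the crucial observation: the trick is to step rightward along the line $l$ to the \emph{next} incidence of $S$, and then use that the steepest incidence of $S$ at that new point cannot beat the slope of $l$ without producing a forbidden edge. Everything else---well-definedness of the extremal choices and injectivity of the map---is routine bookkeeping that hinges only on the distinctness of $x$-coordinates and slopes.
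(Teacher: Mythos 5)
Your proof is correct and takes a genuinely different route from the paper's. The paper recasts the claim as an extremal problem for ordered bipartite graphs with parts $P\prec\mathcal{L}$ that avoid the ordered path $x_1\prec x_2\prec x_3\prec x_4$ with edges $x_1x_3,x_2x_3,x_2x_4$, and then argues by contradiction via deletion and pigeonhole: for each point it removes the incidence with the steepest line through that point (at most $N$ removals), and if more than $N$ edges survive, some line is incident to two surviving points $x\prec x'$, and reinserting the deleted steepest incidence at $x'$ reassembles the forbidden path. You instead work directly in the incidence language with an explicit injection: partition $S$ into the at most $N$ line-maximal incidences and the rest, and inject the rest into the at most $N$ point-maximal incidences by stepping to the next $S$-incidence along the line. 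Your ``crucial observation''---that this next incidence must be point-maximal because otherwise a forbidden edge appears---is the same structural fact that drives the paper's contradiction, just packaged differently. The paper's version makes the ordered-Tur\'an content visible; yours is more self-contained, constructive, and sidesteps the bipartite reformulation altogether.
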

	
	\begin{proof}
		Let $B$ be the incidence graph of $(P,\mathcal{L})$, that is, $B$ is the bipartite graph with parts $P$ and $\mathcal{L}$, and $I(P,\mathcal{L})$ is the edge set of $B$. Also, define the ordering $\prec$ on $V(B)$ such that for $p,p'\in P$, $p\prec p'$ if $x(p)<x(p')$, for $l,l'\in\mathcal{L}$, $l\prec l'$ if $s(l)<s(l')$, and $p\prec l$ for every $p\in P$, $l\in \mathcal{L}$. If $J\subset I(P,\mathcal{L})$ is an independent set of $G$, then there exists no $p,p'\in P$ and $l,l'\in \mathcal{L}$ such that $x(p)<x(p')$, $s(l)<s(l')$ and $(p,l),(p',l),(p',l')\in J$ (note that this is actually a stronger condition than $J$ being an independent set). But this configuration of 3 edges in $B$ is equivalent to a copy of the ordered path $S$, which has 4 vertices $x_{1}\prec x_{2}\prec x_{3}\prec x_{4}$ and 3 edges $x_{1}x_{3},x_{2}x_{3},x_{2}x_{4}$.  
		
		Therefore, it is enough to show that if $(H,\prec)$ is a bipartite ordered graph with parts $X\prec Y$ of size $N$, and $H$ does not contain a copy of the ordered path $S$, then $H$ has at most $2N$ edges. This statement is well known in different forms, but due to its simplicity, let us present the proof.
		Suppose that $H$ has at least $2N+1$ edges. For each $x\in X$, let $y_{x}$ denote the largest element of $Y$ (with respect to $\prec$) such that $(x,y_{x})$ is an edge of $H$, and if $x$ is an isolated vertex, do not define $y_{x}$. For each $x\in X$, delete the edge $(x,y_{x})$ from $H$, and let $H'$ be the resulting graph. Then $H'$ still has at least $N+1$ edges. Therefore, there exists $y\in Y$ of degree at least 2, let $x\prec x'$ be two neighbors of $y$. But then setting $y'=y_{x'}$, the edges $xy, x'y,x'y'$ form a copy of $S$. 
	\end{proof}
	
	\noindent Therefore, by using the inequality $\chi(G)\geq \frac{|V(G)|}{\alpha(G)}$, we get $\chi(G)\geq \frac{|I(P,\mathcal{L})|}{2N}$. 
	
	We finish the proof by noting the well known result that there exists a point-line configuration $(P,\mathcal{L})$ such that $|P|=|\mathcal{L}|=N$ and $|I(P,\mathcal{L})|=\Theta(N^{4/3})$, which is the maximum number of incidences by the celebrated Szemer\'edi-Trotter theorem \cite{SzT83}. Indeed, setting $n=|I(P,\mathcal{L})|$, such a configuration gives a Hasse diagram $G$ with $n$ vertices and chromatic number $\chi(G)=\Omega(N^{1/3})=\Omega(n^{1/4})$.
	
	As we will also need it later, let us describe a point-line configuration with $\Theta(N^{4/3})$ incidences, which we shall refer to as the \emph{standard configuration}. Take $$P=\{(a,b)\in \mathbb{N}^{2}:a<N^{1/3}, b<N^{2/3}\}$$ and $$\mathcal{L}=\{ax+b=y:a,b\in \mathbb{N},a<N^{1/3},b<N^{2/3}\}.$$
	\hfill$\Box$
	
	\section{Large girth}
	
	In this section, we prove Theorem \ref{thm:girth}. We will consider the same graph $G$ as in the previous section, but in order to guarantee high girth, we chose our point-line configuration $(P,\mathcal{L})$ differently.
	
	Let $(P,\mathcal{L})$ be a point-line configuration, and let $B$ be the incidence graph of $(P,\mathcal{L})$. 
	
	\begin{claim}\label{claim:girth}
		If the girth of $B$ is at least $2k-2$, then the girth of $G$ is at least $k$.
	\end{claim}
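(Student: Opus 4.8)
The plan is to argue by contraposition: I will show that a cycle of length $m\le k-1$ in $G$ gives rise to a cycle of length at most $2m-2\le 2k-4$ in $B$, which is impossible once $\mathrm{girth}(B)\ge 2k-2$. So suppose $v_1v_2\cdots v_mv_1$ is a cycle in $G$, and write $v_i=(p_i,l_i)$.

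The heart of the matter is to convert this cycle into a short closed walk in $B$. The key local fact is that an edge $v_iv_{i+1}$ of $G$ forces one of $p_{i+1}\in l_i$ or $p_i\in l_{i+1}$; hence $p_i$ and $p_{i+1}$ always lie on a common line $g_i\in\{l_i,l_{i+1}\}$. Concatenating the length-$2$ paths $p_i - g_i - p_{i+1}$ around the cycle yields a closed walk $W=p_1\,g_1\,p_2\,g_2\cdots p_m\,g_m\,p_1$ of length $2m$ in $B$. I would first handle the generic case, in which $p_1,\dots,p_m$ are pairwise distinct and $l_1,\dots,l_m$ are pairwise distinct. Here one counts the vertices and edges actually used by $W$. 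The cyclic sequence $x(p_1),\dots,x(p_m)$ is non-constant, since adjacent entries differ (each $G$-edge strictly orders the $x$-coordinates of its endpoints), so it has some number $t\ge 1$ of strict local maxima, and equally many strict local minima. A line $l_s$ is visited by $W$ unless $v_s$ is a local maximum, so $W$ uses exactly $m-t$ distinct lines and $|V(W)|=2m-t$; meanwhile the $2m$ edge-slots of $W$ coincide in pairs exactly at the $t$ local minima (there $g_{s-1}=g_s=l_s$, and the edge $p_sl_s$ is traversed out and back), so $|E(W)|=2m-t$ as well. A connected graph with as many edges as vertices is not a forest, so $E(W)$ contains a cycle; that cycle has length at most $|V(W)|=2m-t\le 2m-1$, hence at most $2m-2$ because $B$ is bipartite.

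Finally I would reduce the non-generic case to the generic one. If $p_i=p_j$ or $l_i=l_j$ for some $i\ne j$, pick such a coincidence whose two indices are at minimal cyclic distance $d$; then $2\le d\le m-2$, because consecutive (and also ``antipodal'') vertices of the cycle have distinct points and distinct lines, and the sub-arc $v_i,\dots,v_j$ is generic apart from the single identification $p_i=p_j$ (resp. $l_i=l_j$). Running the construction on this arc — using the common lines $g_k$ of consecutive points when $p_i=p_j$, and dually the common points $h_k\in\{p_k,p_{k+1}\}$ of consecutive lines when $l_i=l_j$ — gives a closed walk of length $2d\le 2m-4$ in $B$, whose edge set by the same count is not a forest, hence contains a cycle of length at most $2d<2k-2$. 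Either way we have produced a cycle of $B$ of length less than $2k-2$, contradicting the hypothesis. The step I expect to require the most care is the bookkeeping in the generic case: identifying exactly which lines and which incidence-edges are lost when the $2m$ steps of $W$ are compressed into an honest subgraph of $B$, and confirming the resulting graph has equally many vertices and edges (equivalently, that it is never a forest). Setting up the minimal ``bad arc'' and the dual construction in the non-generic case is the other place one must be careful.
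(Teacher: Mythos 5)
Your approach differs from the paper's. The paper's proof asserts that $l_1,p_2,l_2,\dots,p_{r-1},l_{r-1},p_r,l_1$ is itself a closed walk of length $2r-2$ in $B$ with no two consecutive edges equal; you instead build the longer walk $p_1,g_1,\dots,p_m,g_m,p_1$ out of the common lines of consecutive points and then run a vertex-versus-edge count on the spanned subgraph. Your generic case is correct: the identification of backtracks with local minima, the count $|V(W)|=|E(W)|=2m-t$, and the parity step are all done carefully.

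The non-generic reduction has a real gap, and it is precisely the place you flagged as delicate. Take the minimal coincidence at cyclic distance $d=2$, say $p_i=p_{i+2}$, in the case where $v_{i+1}$ is a local minimum in the $x$-order, i.e., $v_{i+1}\to v_i$ and $v_{i+1}\to v_{i+2}$; then $g_i=g_{i+1}=l_{i+1}$, and the sub-arc closed walk is $p_i,\,l_{i+1},\,p_{i+1},\,l_{i+1},\,p_i$. Its spanned subgraph in $B$ is the single path $p_i-l_{i+1}-p_{i+1}$: three vertices and two edges, a tree. The claim ``whose edge set by the same count is not a forest'' is false there, and no cycle of $B$ is produced. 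Nor does passing to the complementary arc or to the full-cycle walk help. Concretely, take four lines $l_1,l_2,l_3,l_4$ through a common point $p_1$ with $s(l_2),s(l_4)<s(l_1),s(l_3)$, together with points $p_2\in l_2$ and $p_4\in l_4$ satisfying $x(p_2),x(p_4)<x(p_1)$: then $(p_1,l_1)-(p_2,l_2)-(p_1,l_3)-(p_4,l_4)-(p_1,l_1)$ is a $4$-cycle in $G$, while the incidence graph $B$ on these seven objects is a tree, so no short cycle of $B$ can be extracted from any walk. Note that the paper's own walk $l_1,p_2,l_2,p_3,l_3,p_4,l_1$ also fails to be a walk of $B$ in this configuration (it needs $p_2\in l_1$, $p_3\in l_2$, etc.), so the difficulty is not an artifact of your choice of walk; the $d=2$ local-minimum case genuinely needs a separate idea, and as it stands the non-generic branch of your argument does not close.
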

	
	\begin{proof}
		Suppose that $G$ contains a cycle of length $r$ for some $r\leq k-1$, and let the vertices of such a cycle be $(p_{1},l_{1}),\dots,(p_{r},l_{r})$. Then $l_{1},p_{2},l_{2},\dots,p_{r-1},l_{r-1},p_{r},l_{1}$ is a closed walk of length $2r-2$ in $B$. But no two consecutive edges of this walk are the same, so this walk contains a cycle of length at most $2r-2\leq 2k-4$ in $B$, contradiction.
	\end{proof}
	
	Therefore, our task is reduced to constructing a point-line configuration, whose incidence graph has large girth. In \cite{MSV19}, it is proved that for every $r$ there exist point-line configurations, whose incidence graph has $N$ vertices, has at least $\Omega(N^{1+\frac{4}{r^{2}+6r-3}})$ edges, and has girth at least $r+5$. We show that this can be improved by simply looking at a random subset of the standard point-line configuration. Most of this section is devoted to the proof of the following lemma.
	
	\begin{lemma}\label{lemma:girth}
		For every positive integer $N_{0}$, there exists a point-line configuration $(P,\mathcal{L})$ such that $|P|=|\mathcal{L}|=N_{0}$, $|I(P,\mathcal{L})|\geq N_{0}^{1+\frac{1}{2k-4}+o(1)}$, and the incidence graph of $(P,\mathcal{L})$ has girth at least $2k-2$. 
	\end{lemma}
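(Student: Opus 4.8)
The plan is to obtain $(P,\mathcal{L})$ by randomly sparsifying the standard configuration and then deleting a few incidences to kill every short cycle. Let $M$ be an auxiliary parameter (related to $N_{0}$ only at the very end), take the standard configuration on a grid rescaled so that it has $M$ points, $M$ lines and $\Theta(M^{4/3})$ incidences, and let $B_{0}$ be its incidence graph. Since two distinct lines meet in at most one point and two distinct points lie on at most one common line, $B_{0}$ has no $4$-cycle, and $B_{0}$ is bipartite; hence to force girth at least $2k-2$ it suffices to destroy all cycles of lengths $6,8,\dots,2k-4$ while keeping polynomially many incidences.

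The first step, which carries essentially all the work, is to bound for each $3\le j\le k-2$ the number $c_{2j}$ of cycles of length $2j$ in $B_{0}$. Removing one line from such a cycle leaves a path $p_{1},\ell_{1},p_{2},\ell_{2},\dots,\ell_{j-1},p_{j}$ through $j$ grid-points and $j-1$ grid-lines, and the removed line is the unique line of $\mathcal{L}$ through $p_{1}$ and $p_{j}$ if one exists, so $c_{2j}$ is at most $O(j)$ times the number of such paths. Traversing the path vertex by vertex and using that each grid-point lies on $O(M^{1/3})$ grid-lines and each grid-line carries $O(M^{1/3})$ grid-points already gives $c_{2j}=O(M^{(2j+1)/3})$. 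I would then save a power of $M$ from the arithmetic defining the grid: a grid-line through grid-points $(x_{1},y_{1})$ and $(x_{2},y_{2})$ forces $(x_{2}-x_{1})\mid(y_{2}-y_{1})$ with the quotient (the slope) and the intercept both bounded, which on average costs a factor of order $1/|x_{2}-x_{1}|$ each time a line is chosen through one fixed and one free point; the resulting divisor sums $\sum_{\delta}\gcd(\cdot,\cdot)/\delta$ over the grid contribute only $M^{o(1)}$. The target is a bound of the shape $c_{2j}\le M^{2j/3+o(1)}$, which for $j=3$ is the correct order ($c_{6}=M^{2+o(1)}$). The hard part will be that for $j\ge 4$ the $j$ grid-points of the cycle need not be sorted by $x$-coordinate (unlike a triangle, whose three vertices are automatically in convex position), so one must run the path-count separately for each cyclic pattern of the $x$-coordinates and check that the divisibility savings persist in every case.

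With such a bound in hand I would sparsify: keep each grid-point with probability $p$ and, independently, each grid-line with probability $p$, obtaining $(P',\mathcal{L}')$. Then $\mathbb{E}|P'|=\mathbb{E}|\mathcal{L}'|=pM$ and $\mathbb{E}\,|I(P',\mathcal{L}')|=\Theta(p^{2}M^{4/3})$, all concentrated by Chernoff bounds, while a fixed $2j$-cycle of $B_{0}$ uses $j$ points and $j$ lines and so survives with probability $p^{2j}$, whence the expected number of surviving cycles of length $2j$ is at most $p^{2j}c_{2j}\le p^{2j}M^{2j/3+o(1)}$. I would take $p=M^{-\theta}$ with $\theta=\theta(k)\in(0,1/3)$ as large as possible subject to $\sum_{j=3}^{k-2}p^{2j}c_{2j}\le\tfrac14\mathbb{E}\,|I(P',\mathcal{L}')|$; since $p^{2j}M^{2j/3}\big/(p^{2}M^{4/3})=M^{2j(1/3-\theta)+2\theta-4/3}$ and $1/3-\theta>0$, this ratio is increasing in $j$, so the binding constraint is the one coming from the longest forbidden cycle, $j=k-2$, and it fixes $\theta$. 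By the first-moment (deletion) method there is an outcome in which $|I(P',\mathcal{L}')|\ge\tfrac12\mathbb{E}\,|I(P',\mathcal{L}')|$, the number of cycles of length at most $2k-4$ is at most $\tfrac14\mathbb{E}\,|I(P',\mathcal{L}')|$, and $|P'|,|\mathcal{L}'|\le 2pM$; deleting one incidence from each such short cycle yields a configuration whose incidence graph has girth at least $2k-2$ and that still has $\Omega(p^{2}M^{4/3})$ incidences on at most $2pM$ points and at most $2pM$ lines.

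Finally, given $N_{0}$, I would choose $M=\Theta(N_{0}^{1/(1-\theta)})$ so that $2pM\le N_{0}$, and pad the point set and the line set with mutually generic isolated points (on none of the chosen lines) and isolated lines (through none of the chosen points) up to exactly $N_{0}$ points and $N_{0}$ lines; this changes neither the girth of the incidence graph nor the number of incidences. Substituting $\theta$ and $M$ gives $|I(P,\mathcal{L})|\ge\Omega(p^{2}M^{4/3})=N_{0}^{(4/3-2\theta)/(1-\theta)+o(1)}$, and a short computation shows $(4/3-2\theta)/(1-\theta)\ge 1+\tfrac{1}{2k-4}$ (with room to spare under the target cycle bound), which yields the lemma; the logarithmic factors from the divisor sums and the Chernoff slack are absorbed into the $o(1)$ in the exponent. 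As indicated, the main obstacle is the uniform cycle count of the second paragraph — in particular extracting a power-of-$M$ saving from the Diophantine conditions for all path shapes, including the non-$x$-monotone ones; the rest is a routine alteration argument.
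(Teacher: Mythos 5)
Your high-level plan (random sparsification of the standard grid configuration, plus a deletion step to kill short cycles) is the same as the paper's, but two of your steps have genuine problems.

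\textbf{The deletion step is not geometrically valid.} You propose to ``delete one incidence from each short cycle.'' But the lemma asks for a point-line configuration whose \emph{incidence graph} has girth at least $2k-2$. Deleting an edge from the incidence graph has no geometric meaning: if $p\in l$, that incidence is present in the incidence graph of any configuration containing both $p$ and $l$. To destroy a short cycle you must delete a point or a line, which removes up to $\Theta(pM^{1/3})$ incidences at once. This changes the budget: instead of requiring $\sum_{j}p^{2j}c_{2j}\lesssim p^{2}M^{4/3}$ you need $\sum_{j}p^{2j}c_{2j}\cdot pM^{1/3}\lesssim p^{2}M^{4/3}$, i.e.\ the number of short cycles must be $O(pM)$, not $O(p^{2}M^{4/3})$. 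Redoing your optimization under this stronger constraint replaces your $\theta=\frac{k-4}{3(k-3)}$ by $\theta=\frac{2k-7}{6k-15}$ (the paper's $q$), and $\frac{4/3-2\theta}{1-\theta}$ then evaluates to exactly $1+\frac{1}{2k-4}$, not $1+\frac{1}{2k-5}$. So the ``room to spare'' you claim disappears; the exponent in the lemma is tight for this method.

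\textbf{The cycle count --- which you correctly identify as the crux --- is left unresolved, and the path-by-path Diophantine attack you sketch is harder than necessary.} The paper proves the bound $c_{2r}\le N^{2r/3+o(1)}$ in two clean steps that sidestep your worry about non-$x$-monotone path shapes entirely. First it passes to the auxiliary graph $H$ on $P_{0}$ where two points are adjacent iff they lie on a common grid-line; since the incidence graph is $C_{4}$-free, each $2r$-cycle of $B_{0}$ corresponds to a unique $r$-cycle of $H$. Second, it proves a \emph{common-neighbor} lemma (Claim~\ref{claim:common}): any two fixed grid-points have at most $N^{1/3+o(1)}$ common neighbors in $H$, by a single divisor-counting argument for a fixed pair $(p,p')$. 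Then one counts $r$-cycles in $H$ by choosing every second vertex freely ($N$ ways each) and filling in the alternate vertices as common neighbors of their two already-chosen neighbors ($N^{1/3+o(1)}$ ways each). This gives $N^{2r/3+o(1)}$ with no case analysis on the cyclic pattern of $x$-coordinates. Your approach would have to re-derive an analogous saving for every path shape, which is exactly the difficulty you flag; the common-neighbor reformulation makes that difficulty evaporate.

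A smaller point: the paper also samples an $N_{0}$-element subset of what remains rather than padding with generic isolated points and lines; your padding trick is fine and equivalent for the purposes of the lemma.
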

	
	Let $N$ be a positive integer, which we specify later, and let $(P_{0},\mathcal{L}_{0})$ be the standard point-line configuration with $N$ points and $N$ lines. Let $B_{0}$ be the incidence graph of $(P_{0},\mathcal{L}_{0})$. Note that every vertex of $B_{0}$ has degree at most $N^{1/3}$, and at least $\frac{N}{2}$ lines and at least $\frac{N}{2}$ points have degree at least $\frac{N^{1/3}}{2}$. Let $H$ be the graph on $P_{0}$ in which $p$ and $p'$ are joined by an edge if $p,p'\in l$ for some $l\in \mathcal{L}_{0}$. Note that by the previous observation, the degree of each vertex of $H$ is at most $N^{2/3}$. 
	
	\begin{claim}\label{claim:common}
		Let $p,p'\in P_{0}$ be distinct vertices. The number of common neighbors of $p$ and $p'$ in $H$ is at most $N^{1/3+o(1)}$.
	\end{claim}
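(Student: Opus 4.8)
The plan is to fix the two distinct points $p=(x_1,y_1)$ and $p'=(x_2,y_2)$, put $\delta=x_2-x_1$ and $c=y_2-y_1$, and parametrize the common neighbors carefully enough that a divisor bound can be applied. First I would record the structure of a common neighbor $q=(x_3,y_3)\in P_0$ (note $q\notin\{p,p'\}$, since $H$ has no loops): by definition there is a line $l_1\in\mathcal{L}_0$ through $p$ and $q$ and a line $l_2\in\mathcal{L}_0$ through $p'$ and $q$, and since every line of $\mathcal{L}_0$ is non-vertical we get $x_3\neq x_1$ and $x_3\neq x_2$. Let $a_1$ and $a_2$ be the slopes of $l_1$ and $l_2$; these are non-negative integers less than $N^{1/3}$, and each is determined by $q$ (two distinct points lie on at most one common line). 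Writing the two incidences as $y_3-y_1=a_1(x_3-x_1)$ and $y_3-y_2=a_2(x_3-x_2)$, subtracting, and using $x_3-x_1=(x_3-x_2)+\delta$, I obtain the key identity
\[
(a_1-a_2)(x_3-x_2)=c-a_1\delta .
\]
Abbreviating $e:=x_3-x_2$ (which is nonzero) and $m:=a_1-a_2$, this reads $em=c-a_1\delta$, and $q$ is recovered from the pair $(a_1,e)$ via $x_3=x_2+e$ and $y_3=y_1+a_1(e+\delta)$; so it suffices to bound the number of admissible pairs $(a_1,e)$.

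Next I would split according to whether $m=0$. If $m=0$, i.e.\ $a_1=a_2$, then $l_1$ and $l_2$ are parallel lines through the common point $q$, hence $l_1=l_2$, so $p$, $p'$, $q$ are collinear on a single line of $\mathcal{L}_0$; there is at most one such line (and none if $x_1=x_2$), and it contains at most $N^{1/3}$ points of $P_0$, so this case yields at most $N^{1/3}$ common neighbors. If $m\neq 0$, then by the identity $e$ is a divisor of the integer $c-a_1\delta$, which is nonzero (otherwise $em=0$ would force $m=0$). Since $|c|=|y_2-y_1|<N^{2/3}$ and $|a_1\delta|<N^{1/3}\cdot N^{1/3}=N^{2/3}$, we have $0<|c-a_1\delta|<2N^{2/3}$, so by the classical divisor bound $\tau(n)=n^{o(1)}$ there are only $N^{o(1)}$ choices for $e$ once $a_1$ is fixed. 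As there are at most $N^{1/3}$ lines of $\mathcal{L}_0$ through $p$, hence at most $N^{1/3}$ admissible values of $a_1$, this case contributes at most $N^{1/3}\cdot N^{o(1)}=N^{1/3+o(1)}$ common neighbors. Summing the two cases gives the claimed bound $N^{1/3+o(1)}$.

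The step I expect to be the crux --- and the reason the bound is $N^{1/3+o(1)}$ rather than the trivial $N^{2/3}$ --- is precisely organizing the count so the divisor bound applies. The naive estimate (choose one of the $\le N^{1/3}$ lines through $p$, one of the $\le N^{1/3}$ lines through $p'$, and intersect them) only gives $N^{2/3}$; the gain comes from first fixing the slope $a_1$, after which $e=x_3-x_2$ is forced to divide the \emph{specific} integer $c-a_1\delta$ of size $O(N^{2/3})$, and $\tau(n)=n^{o(1)}$ is exactly what turns ``$e$ divides a bounded integer'' into ``$N^{o(1)}$ choices''. The remaining points are routine bookkeeping: excluding $q\in\{p,p'\}$, checking that the degenerate case $x_1=x_2$ makes the collinear case vacuous while keeping $c\neq 0$ (since $p\neq p'$), and verifying that $q\mapsto(a_1,e)$ is well defined and injective.
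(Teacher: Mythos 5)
Your proof is correct and follows essentially the same route as the paper: fix the slope $a_1$ of the line through $p$ and $q$, derive the divisibility constraint that $x_3-x_2$ divides the bounded integer $c-a_1\delta$, and apply the divisor bound, then sum over the $O(N^{1/3})$ choices of slope (the paper does exactly this after translating $p$ to the origin, with $t=v/u$ playing the role of your $a_1$ and $u-a$ the role of your $e$). Your write-up is in fact slightly more careful than the paper's, explicitly handling the case $a_1=a_2$ (collinear $p,p',q$) and verifying that $c-a_1\delta\neq 0$ when $a_1\neq a_2$, both of which the paper glosses over.
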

	
	\begin{proof}
		Translate our configuration such that $p=(0,0)$, let $p'=(a,b)$, and let $q=(u,v)\in P_{0}$ be a common neighbor of $p$ and $p'$.  Recall that the slope of every line $l\in \mathcal{L}_0$ is an integer.  Therefore, as $p$ and $q$ are on the same line of $\mathcal{L}_{0}$, we have $u|v$ and $|\frac{v}{u}|\leq N^{1/3}$. Also, as $p'$ and $q$ are on the same line of $\mathcal{L}_{0}$, we have $(u-a)|(v-b)$ and $|\frac{v-b}{u-a}|\leq N^{1/3}$. Set $t=\frac{v}{u} \in \mathbb{Z}$, which implies
		$$v-b=t(u-a)+ta-b.$$
		Then by dividing both sides by $(u-a)$, we can see that $(u-a)|(ta-b)$. Here, $ta-b=O(N)$, which means that the number of divisors (including both positive and negative) of $ta-b$ is $N^{o(1)}$ (see, e.g., \cite{HW}). Therefore, given $t$, $u$ can have at most $N^{o(1)}$ distinct values, and also $t$ can take at most $2N^{1/3}$ distinct values. Thus, $p$ and $p'$ have at most $N^{1/3+o(1)}$ common neighbors in $H$.
	\end{proof}
	
	Now we will use this claim to count the number of cycles of given length in $B_{0}$.
	
	\begin{claim}\label{claim:ncycles}
		For every $r\geq 3$, the number of cycles of length $2r$ in $B_{0}$ is at most $N^{2r/3+o(1)}$.
	\end{claim}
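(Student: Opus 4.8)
The plan is to relate cycles of length $2r$ in the bipartite incidence graph $B_0$ to cycles of length $r$ in the auxiliary graph $H$, and then to count the latter using the codegree bound of Claim~\ref{claim:common}. First I would observe that if $p_1,l_1,p_2,l_2,\dots,p_r,l_r$ are the vertices of a $2r$-cycle of $B_0$ in cyclic order, then, since $B_0$ is bipartite with parts $P_0$ and $\mathcal{L}_0$, we have $p_i\in P_0$, $l_i\in\mathcal{L}_0$, and $p_i,p_{i+1}\in l_i$ for each $i$ modulo $r$; hence $p_i$ and $p_{i+1}$ are adjacent in $H$, and as the $p_i$ are pairwise distinct, $p_1p_2\cdots p_r$ is a cycle of length $r$ in $H$. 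Since $l_i$ is the unique line through $p_i$ and $p_{i+1}$, the sequence $(l_1,\dots,l_r)$ is recovered from $(p_1,\dots,p_r)$, so the number of $2r$-cycles of $B_0$ is at most the number of sequences $(p_1,\dots,p_r)$ of pairwise distinct points of $P_0$ with $p_ip_{i+1}\in E(H)$ for all $i$ modulo $r$.

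The main step is then to bound the number of such sequences. Here I would use two facts already at hand: every vertex of $H$ has degree at most $N^{2/3}$, and by Claim~\ref{claim:common} any two distinct vertices of $H$ have at most $N^{1/3+o(1)}$ common neighbors. Choosing the vertices greedily, there are at most $N$ choices for $p_1$, at most $N^{2/3}$ choices for each of $p_2,\dots,p_{r-1}$ (a neighbor of its predecessor), and, crucially, at most $N^{1/3+o(1)}$ choices for $p_r$, which must be a common neighbor of $p_{r-1}$ and $p_1$; the codegree bound applies because $p_{r-1}\neq p_1$, which holds since $r\geq 3$ and all the $p_i$ are distinct. Multiplying, the number of such sequences, and hence the number of $2r$-cycles of $B_0$, is at most
\[
N\cdot\bigl(N^{2/3}\bigr)^{r-2}\cdot N^{1/3+o(1)}=N^{\,1+\frac{2(r-2)}{3}+\frac{1}{3}+o(1)}=N^{2r/3+o(1)},
\]
where the finitely many $o(1)$ terms combine into one since $r$ is a fixed constant.

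The real content is entirely in Claim~\ref{claim:common}; the rest is bookkeeping, so I do not expect any serious obstacle here. It is worth flagging, though, why the codegree bound is essential: the naive count, treating every $p_i$ with $i\geq 2$ as an arbitrary neighbor of its predecessor, gives only $N^{(2r+1)/3+o(1)}$, which is too large by a factor of $N^{1/3}$. Closing the cycle through the common neighbor $p_r$ of $p_{r-1}$ and $p_1$ is exactly what recovers this factor, and it is the reason the vertices must be kept pairwise distinct so that Claim~\ref{claim:common} can be invoked.
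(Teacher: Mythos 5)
Your proof is correct, and it takes a genuinely different route from the paper's, though the key ingredients are the same. Both arguments reduce to counting $r$-cycles in $H$ using the observation that the line through two points is unique (which is exactly why $B_0$ is $C_4$-free), and both lean on the degree bound $\Delta(H)\leq N^{2/3}$ together with the codegree bound of Claim~\ref{claim:common}. Where you differ is the decomposition of the cycle: you walk around the cycle greedily and invoke the codegree bound only once, to close it up through $p_r$, giving $N\cdot (N^{2/3})^{r-2}\cdot N^{1/3+o(1)}$. The paper instead splits into cases according to the parity of $r$, first fixes the vertices at even positions arbitrarily, and then uses the codegree bound $\lfloor r/2\rfloor$ times to place the odd-position vertices between consecutive chosen ones. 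Both computations land exactly on $N^{2r/3+o(1)}$ — applying the codegree bound once on a walk of length $r$ saves precisely the same $N^{1/3}$ factor as the alternating scheme. Your version has the modest advantage of avoiding the even/odd case split and a cleaner narrative (``the gain comes entirely from closing the cycle''), while the paper's alternating scheme generalizes more naturally if one wanted to exploit codegree bounds at every other step, e.g.\ in situations where the codegree bound is substantially stronger than $\sqrt{\text{degree bound}}$. As a minor bookkeeping note, you should say that each $2r$-cycle of $B_0$ (viewed up to rotation/reflection) contributes a bounded number of sequences $(p_1,\dots,p_r)$, not exactly one, but this is absorbed by the $o(1)$ and does not affect the conclusion.
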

	
	\begin{proof}
		Note that each cycle of length $2r$ in $B_{0}$ gives rise to a unique cycle of length $r$ in $H$ (using that $B_{0}$ is $C_{4}$-free), so it is enough to count the cycles of length $r$ in $H$.
		
		Consider two cases. If $r$ is even, count the cycles $p_{1},\dots,p_{r}$ in $H$ as follows. We have at most $N^{r/2}$ ways to choose the vertices $p_{2i}$ for $i=1,\dots,\frac{r}{2}$. After  $p_{2i-2}$ and $p_{2i}$ are fixed (indices are meant modulo $r$), we have at most $N^{1/3+o(1)}$ choices for $p_{2i-1}$ by Claim \ref{claim:common}. Therefore, we have at most $N^{r/6+o(1)}$ further choices for the vertices $p_{2i-1}$ for $i=1,\dots,r/2$. In total, there are at most $N^{2r/3+o(1)}$ cycles of length $r$ in $H$.
		
		If $r$ is odd, we proceed similarly. We have at most $N^{(r-1)/2}$ ways to choose the vertices $p_{2i}$ for $i=1,\dots,\frac{r-1}{2}$. Then, we have at most $N^{2/3}$ further choices for $p_{1}$ as it is a neighbor of $p_{2}$. For each vertex $p_{2i-1}$, where $i=2,\dots,(r+1)/2$, there are at most $N^{1/3+o(1)}$ choices by Claim \ref{claim:common}, which gives at most $N^{(r-1)/6+o(1)}$ further choices. Thus, in total, there are at most $N^{2r/3+o(1)}$ cycles of length $r$ in $H$.	
	\end{proof}
	
	Let us remark that in general, the $o(1)$ error term is needed in Claim \ref{claim:ncycles}. Indeed, a result of Klav\'ik, Kr\'al' and Mach \cite{KKM11} gives that the number of cycles of length $6$ in $B_{0}$ is $\Omega(N^{2}\log\log N)$.
	
	\begin{proof}[Proof of Lemma \ref{lemma:girth}]
		For $3\leq r\leq k-2$, let $C_{r}$ be the number of cycles of length $2r$ in $B_{0}$. Recall that $(P_0,\mathcal{L}_0)$ is the standard point-line configuration with $N$ points and $N$ lines, and $B_0$ is its incidence graph.  In what follows, let $q\in (0,1)$, and let $P'$ and $\mathcal{L}'$ be subsets of $P_{0}$ and $\mathcal{L}_{0}$, in which each element is present independently with probability $q$, and let $B'$ be the subgraph of $B_{0}$ induced by $P'\cup\mathcal{L}'$. Also, let $X_{r}$ be the number of cycles of length $2r$ in $B'$. Then $\mathbb{E}(|P'|)=\mathbb{E}(|\mathcal{L}'|)=qN$ and $\mathbb{E}(X_{r})=q^{2r}C_{r}$.
		
		In what comes, we suppose that $q>n^{-1/3+\epsilon}$ for some $\epsilon>0$. Let $\mathcal{A}$ be the event that $B'$ satisfies the following properties:
		\begin{itemize}
			\item $\frac{qN}{2}<|P'|,|\mathcal{L}'|< 2qN$,
			\item the degree of every vertex of $B'$ is at most $2qN^{1/3}$,
			\item there are at least $\frac{qN}{4}$ points and lines in $B'$ whose degree in $B'$ is between $\frac{qN^{1/3}}{4}$ and $2qN^{1/3}$.
		\end{itemize}
		
		Then, by standard concentration inequalities (e.g. Chernoff's inequality) we have  $\mathbb{P}(\mathcal{A})>2/3$. If $\mathcal{A}$ holds, then we also have that $|E(B')|\geq \frac{q^{2}N^{4/3}}{16}$. Choose $q$ such that 
		$$\sum_{r=3}^{k-2}\mathbb{E}(X_{r})=\sum_{r=3}^{k-2}q^{2r}C_{r}\leq \frac{qN}{128}$$
		holds. As $C_{r}\leq N^{2r/3+o(1)}$ by Claim \ref{claim:ncycles}, we get that some $q= N^{-\frac{2k-7}{6k-15}+o(1)}$ suffices. But then by Markov's ineqaulity, with probability at least $\frac{1}{2}$, we have $X_{3}+\dots+X_{k-2}\leq \frac{qN}{64}.$ 
		Therefore, there exists a choice for $B'$ such that $\mathcal{A}$ holds and the number of cycles of length at most $2k-4$ in $B'$ is at most $\frac{qN}{64}.$ Delete an arbitrary vertex from each such cycle, and let $B''$ be the resulting graph with parts $P''$ and $\mathcal{L}''$. Then $\frac{qN}{4}<|P''|,|\mathcal{L}''|<2qN$, and we deleted at most $\frac{qN}{64}\cdot 2qN^{1/3}=\frac{q^{2}N^{4/3}}{32}$ edges. Therefore, $B''$ still has at least $\frac{q^{2}N^{4/3}}{32}$ edges, and the girth of $B''$ is at least $2k-2$. Define $N_0$ such that $\frac{qN}{8}<N_{0}<\frac{qN}{4}$, then $N_{0}=N^{\frac{4k-8}{6k-15}+o(1)}$. By sampling random $N_{0}$ element subsets of $P''$ and $\mathcal{L}''$, we get that there exists  an induced subgraph $B$ of $B''$ with parts $P$ and $\mathcal{L}$, each of size $N_{0}$, such that $B$ has at least $\frac{q^{2}N^{4/3}}{128}=N^{\frac{4k-6}{6k-15}+o(1)}=N_{0}^{1+\frac{1}{2k-4}+o(1)}$ edges and $B$ has girth at least $2k-2$. The point-line configuration $(P,\mathcal{L})$ satisfies our desired properties.
	\end{proof}
	
	Now we are ready to prove Theorem \ref{thm:girth}.
	
	\begin{proof}[Proof of Theorem \ref{thm:girth}]
		Let $(P,\mathcal{L})$ be a point-line configuration such that $|P|=|\mathcal{L}|=N_{0}$, $|I(P,\mathcal{L})|\geq N_{0}^{1+\frac{1}{2k-4}+o(1)}$, and the incidence graph of $(P,\mathcal{L})$ has girth at least $2k-2$. By Lemma \ref{lemma:girth}, such a configuration exists. Let $n=|I(P,\mathcal{L})|$, and construct the same graph $G$ as in Section \ref{sect:1} with respect to $(P,\mathcal{L})$. Then $G$ is a Hasse diagram on $n$ vertices, the independence number of $G$ is at most $2N_{0}=n^{1-\frac{1}{2k-3}+o(1)}$ by Claim \ref{claim:ind}, and the girth of $G$ is at least $k$ by Claim \ref{claim:girth}. Hence, $G$ has chromatic number at least $n^{\frac{1}{2k-3}+o(1)}$. 
	\end{proof}

	\section{Concluding remarks}
	
	We proved that the chromatic number of a Hasse diagram on $n$ vertices can be as large as $\Omega(n^{1/4})$. However, it would be interesting to decide whether  it can be $n^{1/2+o(1)}$, which is the general upper bound for all-triangle free  graphs. We suspect that the answer is no.
	
	\begin{conjecture}
		There exists $\epsilon>0$ such that if $G$ is a Hasse diagram on $n$ vertices, then $\chi(G)\leq n^{1/2-\epsilon}$.
	\end{conjecture}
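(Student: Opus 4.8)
The conjecture is, up to the precise value of the exponent, equivalent to a lower bound on the independence number: it suffices to find constants $\eps'>0$ and $c>0$ such that every Hasse diagram $H$ satisfies $\alpha(H)\ge c\,|V(H)|^{1/2+\eps'}$. That this implies the conjecture uses the fact that the class of Hasse diagrams is closed under taking induced subgraphs --- if $\prec$ is an ordering of $V(G)$ with no monotone cycle (Claim~\ref{claim:char}), its restriction to any $S\subseteq V(G)$ still has none, so $G[S]$ is again a Hasse diagram --- so one can repeatedly delete an independent set of size $\Omega(m^{1/2+\eps'})$ from the $m$-vertex induced subgraph that remains, and a routine computation then shows this uses only $O_{\eps'}(n^{1/2-\eps'})$ colours. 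Conversely, $\chi(G)\ge |V(G)|/\alpha(G)$ shows the two formulations are genuinely the same problem, so the plan is to prove this independence-number bound.

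First I would pin down the baseline that must be beaten. Let $<$ be the partial order attached to $G$ as in the proof of Claim~\ref{claim:char}, with height $h$ and width $w$; Dilworth's theorem gives $hw\ge n$. If $w\ge\sqrt n$, an antichain of that size is already an independent set. If instead $h\ge\sqrt n$, take a chain $x_1<x_2<\dots<x_h$: since $x_i<x_{i+1}<x_{i+2}$, the element $x_{i+2}$ never covers $x_i$, so $\{x_1,x_3,x_5,\dots\}$ is independent of size $\ge h/2$. (Equivalently, the set of lower neighbours and the set of upper neighbours of any vertex of $G$ are antichains, so $\Delta(G)\le 2w$.) This already reproves $\chi(G)\le(2+o(1))\sqrt n$, and feeding triangle-freeness into Shearer's bound only improves the independent set to $\sqrt n\cdot\polylog n$. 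The entire content of the conjecture is thus the extra $n^{\eps'}$ factor, which no cheaply-verifiable property --- triangle-freeness, $\Delta(G)=O(\sqrt n)$, or even a subquadratic bound on $|E(G)|$ --- seems to supply.

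The crucial step is therefore to do better in the balanced regime $h,w=\Theta(\sqrt n)$. A natural attempt is to take a near-optimal chain partition $\mathcal{C}_1,\dots,\mathcal{C}_t$ with $t=\Theta(\sqrt n)$ and each $|\mathcal{C}_j|=\Theta(\sqrt n)$, keep the alternating (hence independent) set $A_j$ inside each chain, and then pass to a sub-family of the chains and thin the $A_j$ further until \emph{all} edges running between distinct chains are destroyed, arriving at an independent set of size $n^{1/2+\eps'}$. The cover edges between two distinct chains are precisely where one must use ``no monotone cycle'' rather than mere triangle-freeness: a long monotone path threading several chains and closed up by one further edge would be a forbidden monotone cycle, so the cross-chain edge pattern cannot be too transitive, which one might exploit through dependent random choice or an Erd\H{o}s--Hajnal/Ramsey-type argument. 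An alternative is to induct on the height, localizing to a large up-set or down-set of a single vertex and recursing, using that the surviving ``link'' is sparse enough to be combined with alternation along a chain.

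I expect the main obstacle to be making this last step quantitative: it is genuinely unclear whether forbidding the infinite family of monotone cycles gains a full polynomial factor over the triangle-free bound of $\sqrt n$, as opposed to merely a polylogarithmic one --- and a polylogarithmic gain would not prove the conjecture. The complementary route, namely refuting it by iterating the incidence-graph construction behind Theorem~\ref{thm:mainthm} into a Hasse diagram with $\chi=n^{1/2-o(1)}$, seems to run into the Szemer\'edi--Trotter barrier on point-line incidences, which is itself the heuristic evidence in favour of the conjecture.
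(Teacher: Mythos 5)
This statement is labelled a \emph{conjecture} in the paper, and the authors offer no proof of it; there is therefore no paper argument to compare your attempt against. What I can do is assess whether what you wrote is internally sound, and whether it constitutes a proof. It does not, and to your credit you do not claim that it does.

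The reductions and sanity checks in your write-up are all correct. Hasse diagrams are indeed closed under taking induced subgraphs (restricting a monotone-cycle-free ordering to a subset stays monotone-cycle-free, by Claim~\ref{claim:char}), so the conjecture is equivalent to a hereditary independence bound $\alpha(H)\ge c\,|V(H)|^{1/2+\eps'}$, and the iterated-deletion computation turning that into $\chi(G)=O_{\eps'}(n^{1/2-\eps'})$ is standard. Your baseline is also right: by Mirsky/Dilworth, if the width $w\ge\sqrt n$ an antichain gives an independent set, and if the height $h\ge\sqrt n$ then every other element of a longest chain is independent since consecutive odd-indexed elements have an intermediate witness; likewise the down- and up-neighbourhoods of any vertex are antichains, so $\Delta(G)\le 2w$. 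This recovers $\alpha(G)=\Omega(\sqrt n)$ and $\chi(G)=O(\sqrt n)$ for any Hasse diagram, and you correctly observe that Shearer-type improvements for triangle-free graphs only buy polylogarithmic factors, not the polynomial gain the conjecture demands.

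Where the write-up stops short of a proof is exactly where you say it does: the balanced regime $h,w=\Theta(\sqrt n)$. The two strategies you float --- thinning a near-optimal chain partition by dependent-random-choice-style arguments exploiting the absence of monotone cycles, or conversely trying to refute the conjecture by pushing the incidence construction of Theorem~\ref{thm:mainthm} past the Szemer\'edi--Trotter barrier --- are plausible directions but are presented as heuristics, with no lemma or estimate that would close the $n^{\eps'}$ gap. So the proposal is a well-reasoned assessment of why the conjecture is open and what evidence supports it, not a proof; since the paper likewise leaves it open, the two are in agreement about the status of the problem.
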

	
	Claim \ref{claim:common} in Section 3 has several interesting applications in incidence geometry, some of which we describe below.  
	
	Given a point set $P$ and a set of lines $\mathcal{L}$ in the plane, we say that $(P,\mathcal{L})$ contains a \emph{$k\times k$ grid} if there are $k^2$ distinct points $p_{i,j} \in P$, where $(i,j) \in [k]\times [k]$, and two sets of distinct lines $l_1,\ldots, l_k \in \mathcal{L}$ and $l'_1,\ldots, l'_k\in \mathcal{L}$ such that $p_{i,j} \in l_i\cap l'_j$.  In \cite{MS}, it was shown that any set of $n$ points and $n$ lines in the plane that does not contain a $k\times k$-grid has at most $O(n^{4/3- \varepsilon})$ incidences, where $\varepsilon$ depends on $k$.  A straightforward adaptation of Lemma \ref{lemma:girth} gives the following.
	
	\begin{theorem}
		For every positive integer $k\geq 2$, there exists a set of $n$ points and $n$ lines in the plane that does not contain a $k\times k$ grid, and determines at least $n^{4/3 - \Theta(1/k)}$ incidences.
		
	\end{theorem}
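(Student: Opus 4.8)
The statement asks for a point-line configuration with $n$ points, $n$ lines, no $k \times k$ grid, and $n^{4/3 - \Theta(1/k)}$ incidences. The key observation is that a $k \times k$ grid in $(P,\mathcal{L})$, read through the incidence graph $B$, corresponds to a copy of $K_{k,k}$ in $B$ — and in particular forces short even cycles (a $2 \times 2$ grid is already a $C_4$ in $B$). So the strategy is essentially the same as the proof of Lemma \ref{lemma:girth}: start from the standard point-line configuration $(P_0, \mathcal{L}_0)$ with $N$ points, $N$ lines and $\Theta(N^{4/3})$ incidences, randomly sparsify it by keeping each point and each line independently with probability $q$, and then destroy the few remaining obstructions by deleting a vertex from each.

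First I would make precise which subconfigurations to forbid. Since the standard configuration is already $C_4$-free in $B_0$, a $k \times k$ grid cannot arise from a genuine $K_{k,k}$ in such a sparse-incidence setting; instead, the relevant obstruction is more subtle — but note that if $(P,\mathcal{L})$ contains a $k \times k$ grid, then the bipartite incidence graph restricted to the $2k$ lines and $k^2$ points contains a $C_{2r}$ for some small $r$ (indeed, a grid contains many short cycles). More directly, I would bound the number of potential $k \times k$ grids in $(P_0, \mathcal{L}_0)$ using Claim \ref{claim:common}: fixing the $k$ lines $l_1, \dots, l_k$ (at most $N^k$ ways) and then the $k$ lines $l_1', \dots, l_k'$ likewise is too wasteful, so instead fix $l_1, \dots, l_k$, fix the $k$ points on $l_1$ (each lies on $l_1$, so at most $N^{1/3}$ per point, $N^{k/3}$ total), and observe that each $l_j'$ is then determined by the point $p_{1,j}$ together with one more point $p_{i,j}$ on it — and by an argument in the spirit of Claim \ref{claim:common}, the number of ways to extend is $N^{o(1)}$ per choice or at worst polynomially bounded with a saving over the trivial count. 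The upshot should be that the number of $k \times k$ grids in $B_0$ is at most $N^{\gamma_k}$ for some exponent $\gamma_k$ bounded away from what a random sparsification can tolerate when $q$ is polynomially large.

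The main work, then, mirrors the proof of Lemma \ref{lemma:girth} verbatim. After sparsifying with probability $q$, the expected number of surviving $k \times k$ grids is $q^{\,2k + k^2} \cdot (\text{number of grids in } B_0)$ — each grid uses $2k$ lines and $k^2$ points, so this is the relevant power of $q$ — and I would choose $q$ as large as possible subject to this expectation being $o(qN)$, i.e. $o(|P'|)$. Then, with positive probability, the event $\mathcal{A}$ (the configuration has $\Theta(qN)$ points and lines and $\Omega(q^2 N^{4/3})$ incidences, exactly as in the proof of Lemma \ref{lemma:girth}) holds simultaneously with the number of $k \times k$ grids being $o(qN)$. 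Deleting one point from each surviving grid removes $o(qN) \cdot 2qN^{1/3} = o(q^2 N^{4/3})$ incidences, leaving $\Omega(q^2 N^{4/3})$ incidences and no $k \times k$ grid. Setting $n = \Theta(qN)$ and computing $q^2 N^{4/3}$ in terms of $n$ yields $n^{4/3 - \Theta(1/k)}$ once the exponent $\gamma_k$ is worked out; finally a random $n$-element subsampling of both sides balances the sizes exactly.

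The main obstacle is getting the counting exponent $\gamma_k$ for $k \times k$ grids in the standard configuration sharp enough that the resulting incidence bound is $n^{4/3 - \Theta(1/k)}$ rather than something weaker like $n^{4/3 - \Theta(1/k^2)}$. This requires using Claim \ref{claim:common} efficiently: one should fix few lines and points and exploit that each remaining line of the grid is pinned down by two collinear points lying in a controlled set, so that the number of completions carries a genuine saving of a power of $N$ (shaving roughly $N^{1/3}$ per grid line beyond the first few) rather than being counted trivially. Everything else — the Chernoff estimates for $\mathcal{A}$, Markov's inequality for the grid count, the vertex-deletion cleanup, and the final rebalancing — is routine and identical to what was already done for Lemma \ref{lemma:girth}.
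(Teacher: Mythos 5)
Your high-level plan is the same as the paper's intended one (the paper only says ``a straightforward adaptation of Lemma~\ref{lemma:girth}''), and the outline --- count $k\times k$ grids in the standard configuration $(P_0,\mathcal{L}_0)$, sparsify with probability $q$, tune $q$ so the expected number of surviving grids is $o(qN)$, then delete one vertex per grid and rebalance --- is correct and would yield the theorem. But a few statements in your write-up are off, and one of your worries is misdirected.

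First, a $2\times 2$ grid is \emph{not} a $C_4$ in the incidence graph $B$, and a $k\times k$ grid is \emph{not} a $K_{k,k}$ in $B$. In $B$ a $k\times k$ grid is the bipartite graph on the $2k$ lines and $k^2$ points in which each point has degree exactly $2$ and each line has degree exactly $k$; in particular, a $2\times 2$ grid is a $C_8$, not a $C_4$ (and indeed $B_0$ is $C_4$-free for trivial geometric reasons, so it had better not be a $C_4$). You partly correct course in the next sentence, but the initial framing would lead you astray if you tried to build on it.

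Second, the counting step is actually \emph{less} delicate than you fear. A $k\times k$ grid is determined by its $2k$ lines (the $k^2$ points are then forced to be the pairwise intersections), so the trivial bound $\gamma_k \le 2k$ already holds. With $v=k^2+2k$ vertices per grid, this gives $q = N^{-(\gamma_k-1)/(v-1)} = N^{-\Theta(1/k)}$, and then $q^2N^{4/3}$ incidences on $n=\Theta(qN)$ points and lines works out to $n^{4/3-\Theta(1/k)}$; a crude $\gamma_k$ only worsens the implicit constant in the $\Theta$, not the order. Also, your phrasing ``something weaker like $n^{4/3-\Theta(1/k^2)}$'' is backwards: $n^{4/3-\Theta(1/k^2)}$ would be a \emph{stronger} (larger) lower bound than $n^{4/3-\Theta(1/k)}$, not a weaker one. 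Where Claim~\ref{claim:common} genuinely earns its keep is in handling small $k$: with the trivial $\gamma_k\le 2k$, the resulting $q$ for $k=2,3$ falls below the $N^{-1/3+\epsilon}$ threshold needed for the concentration step in the proof of Lemma~\ref{lemma:girth}, and one does want a sharper bound such as $\gamma_k\le \tfrac{4k}{3}+o(1)$ (fix a $2\times 2$ core via two common-neighbor steps, then each additional grid line is pinned down by its two intersection points with the core's two transversal lines, at most $N^{2/3}$ choices each) to push $q$ back above that threshold. So the careful count is needed, but for a different reason than the one you give.
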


	We say that  $(P,\mathcal{L})$ contains a \emph{$k$-fan} if there are $k+1$ distinct points $p_0,p_1,\ldots, p_k \in P$ and a set of $k+1$ distinct lines $l_0,l_1,\ldots, l_k \in \mathcal{L}$ such that $p_0 \in l_1\cap l_2\cap \cdots \cap l_k$, $p_i \in l_0 \cap l_i$ for $i = 1,\ldots, k$ and $p_0 \not\in l_0$.  In \cite{S}, Solymosi showed that every set of $n$ points and $n$ lines in the plane that does not contain a $2$-fan determines at most $o(n^{4/3})$ incidences.  However, for $k\geq 3$, the following conjecture remains open (see \cite{BMP}).
	
	\begin{conjecture}
		For fixed $k\geq 3$, every set of $n$ points and $n$ lines in the plane that does not contain a $k$-fan determines at most $o(n^{4/3})$ incidences.
		
	\end{conjecture}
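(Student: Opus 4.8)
First I would reduce the conjecture to a clean local statement about the incidence structure. Fix a point $p_0\in P$ and a line $l_0\in\mathcal L$ with $p_0\notin l_0$. Any two distinct lines of $\mathcal L$ through $p_0$ meet $l_0$ in two distinct points (a common point would be distinct from $p_0$ and would therefore force the two lines to coincide), so a $k$-fan with apex $p_0$ and base line $l_0$ exists if and only if $l_0$ contains at least $k$ points of $P$ that are each collinear with $p_0$ via a line of $\mathcal L$. Writing
\[
N(p_0,l_0)=\bigl|\{\,q\in l_0\cap P:\ p_0\text{ and }q\text{ lie on a common line of }\mathcal L\,\}\bigr|,
\]
the configuration $(P,\mathcal L)$ is $k$-fan-free precisely when $N(p_0,l_0)\le k-1$ for every non-incident pair $(p_0,l_0)$. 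For $k=2$ this says that no three lines of $\mathcal L$ in general position have all three pairwise intersection points in $P$ — Solymosi's ``triangle-free'' hypothesis \cite{S} — and the case $k\ge 3$ is what remains open. I would work with this formulation throughout.

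Next comes a counting skeleton, together with a diagnosis of why it alone cannot suffice. Suppose, for contradiction, that a $k$-fan-free configuration has $I:=|I(P,\mathcal L)|\ge\eps n^{4/3}$ for a fixed $\eps>0$ and arbitrarily large $n$. By the dyadic pruning underlying the Szemer\'edi--Trotter theorem \cite{SzT83}, pass to $P'\subseteq P$, $\mathcal L'\subseteq\mathcal L$ still carrying $\Omega_\eps(n^{4/3})$ incidences, with $|P'|,|\mathcal L'|=\Theta(n)$, and essentially regular: each surviving point lies on $\Theta(n^{1/3})$ surviving lines and each surviving line contains $\Theta(n^{1/3})$ surviving points. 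Since two lines share at most one point, the number of points collinear with a fixed $q\in P'$ equals $\sum_{l\ni q}(|l\cap P'|-1)=\Theta(n^{2/3})$. Expanding $\sum N(p_0,l_0)$ over non-incident pairs by summing over the witnessing point first gives
\[
\sum_{(p_0,l_0)\ \text{non-incident}}N(p_0,l_0)\ \ge\ \sum_{q\in P'}d(q)\cdot\bigl(\text{number of points collinear with }q\bigr)-O(n^{5/3})\ =\ \Theta(n^2),
\]
so $N(p_0,l_0)=\Theta(1)$ on average. This produces a $k$-fan when $\eps$ exceeds an absolute constant, but it is useless for the conjecture, since the implied constant does not grow as $\eps\to 0$ and hence cannot rule out $\eps n^{4/3}$ incidences for small $\eps$. (Conversely, randomly sparsifying the standard configuration and removing one element from each surviving $k$-fan — the argument behind the $k\times k$-grid theorem above — shows $k$-fan-free configurations \emph{can} have $n^{4/3-\Theta(1/k)}$ incidences, so the conjecture is the complementary upper bound, and some input beyond first moments is unavoidable.) The natural next move is a dispersion estimate: lower-bound $\sum N(p_0,l_0)^2$, which counts triples $(p_0;q_1,q_2)$ with $q_1,q_2$ on a common line $l_0\not\ni p_0$ and both collinear with $p_0$; regrouping by $(q_1,q_2)$ reduces this to the codegree structure of the collinearity (``super-line'') graph on $P$ — the quantity bounded by Claim \ref{claim:common} for the standard configuration. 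If this second moment could be shown to exceed $(k-1)\sum N(p_0,l_0)$ with a super-constant gap driven by $I/n^{4/3}$, some non-incident pair would satisfy $N(p_0,l_0)\ge k$, giving the forbidden fan.

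The step I expect to be the real obstacle is exactly this dispersion estimate: it amounts to a quantitative \emph{stability} version of the Szemer\'edi--Trotter theorem, strong enough to assert that a $k$-fan-free configuration with $\Theta(n^{4/3})$ incidences must be locally grid-like, and no such theorem is known — even the qualitative shape of near-extremal point--line configurations is poorly understood. Solymosi handles $k=2$ by running the crossing-lemma proof of Szemer\'edi--Trotter and extracting, from the absence of triangles, a self-improving inequality that loses the constant; it is plausible that the same philosophy survives for $k\ge 3$, but controlling $k$ spokes through a single apex rather than one triangle is a genuinely new difficulty — the forbidden subgraph in the incidence graph is a once-subdivided $K_{2,k}$, so one would need a theta-graph (Bondy--Simonovits type) counting bound calibrated to incidence graphs, together with the rigidity of pencils of lines. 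An alternative, algebraic attack is polynomial partitioning in the style of Guth--Katz: cut the plane by a low-degree polynomial, discard the cell walls, and exploit that on a fixed low-degree curve the ``spoke'' relation (lines through $p_0$ meeting $l_0$ at a point of $P$) is a bounded-degree condition, then recurse; the difficulty there is absorbing the boundary incidences without spending the very $o(1)$ factor one is trying to gain. I expect a complete proof needs one of these breakthroughs; a realistic intermediate target — echoing the Klav\'ik--Kr\'al'--Mach phenomenon \cite{KKM11} behind Claim \ref{claim:ncycles} — would be $|I(P,\mathcal L)|=O\!\bigl(n^{4/3}/(\log\log n)^{c_k}\bigr)$ for $k$-fan-free configurations, which already captures the spirit of the conjecture.
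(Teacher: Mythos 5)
This statement is not proved in the paper at all: it is stated as an open conjecture (the paper attributes the problem to the literature, see \cite{BMP}, notes that Solymosi \cite{S} settled only the case $k=2$, and then proves the \emph{complementary} result, namely constructions of $k$-fan-free configurations with $n^{4/3-\Theta(1/k)}$ incidences via Lemma \ref{lemma:girth}). So there is no paper proof to compare against, and your submission, to its credit, does not claim to close the problem either: it is a program sketch, and by your own admission the decisive step is missing. Concretely, the gap is the ``dispersion''/second-moment estimate you flag: to force some non-incident pair $(p_0,l_0)$ with $N(p_0,l_0)\ge k$ you would need $\sum N(p_0,l_0)^2$ (equivalently, control of codegrees in the collinearity graph of a \emph{near-extremal} configuration, not just the standard one as in Claim \ref{claim:common}) to beat $(k-1)\sum N(p_0,l_0)$ by a factor growing with $I/n^{4/3}$, and no structural or stability version of the Szemer\'edi--Trotter theorem \cite{SzT83} strong enough for this is known. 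Your first-moment computation is correct but, as you say, only yields a fan when the incidence constant is large, which cannot establish an $o(n^{4/3})$ bound; and Solymosi's self-improving crossing-lemma argument for $k=2$ does not obviously extend, since the forbidden pattern for $k\ge 3$ is a once-subdivided $K_{2,k}$ through a single apex rather than a triangle.

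What is sound and worth keeping: the reformulation of $k$-fan-freeness as $N(p_0,l_0)\le k-1$ for all non-incident pairs is correct (two lines of a pencil through $p_0$ do meet $l_0\not\ni p_0$ in distinct points), and your observation that random sparsification of the standard configuration gives $k$-fan-free examples with $n^{4/3-\Theta(1/k)}$ incidences is exactly the paper's adaptation of Lemma \ref{lemma:girth}, confirming that only an $o(n^{4/3})$-type upper bound, not a power saving independent of $k$, can be true. But as a proof of the conjecture the attempt is incomplete, and the missing ingredient is precisely the one you identify.
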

	
	Again, a straightforward adaptation of Lemma \ref{lemma:girth} gives the following.

	\begin{theorem}
		There exists a set of $n$ points and $n$ lines in the plane that does not contain a 2-fan and determines at least $n^{7/6 + o(1)}$ incidences.  In general, for $k\geq 3$, there exists a set of $n$ points and $n$ lines in the plane that does not contain a $k$-fan and determines at least $n^{4/3 - \Theta(1/k)}$ incidences.
	\end{theorem}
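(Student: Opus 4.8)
I would run almost verbatim the proof of Lemma \ref{lemma:girth}, with the forbidden structure ``short cycle'' replaced by ``$k$-fan''. It suffices to produce, for every $N_0$, a point-line configuration $(P,\mathcal{L})$ with $|P|=|\mathcal{L}|=N_0$, with no $k$-fan, and with at least $N_0^{\frac{4k-1}{3k}+o(1)}$ incidences: since $\frac{4k-1}{3k}=\frac43-\frac1{3k}$, this is $\frac76$ for $k=2$ and $\frac43-\Theta(1/k)$ for $k\geq3$, so taking $n=N_0$ finishes the proof. (For $k=2$ one can instead just invoke Lemma \ref{lemma:girth} with parameter $5$: a $2$-fan is precisely a $6$-cycle in the incidence graph, so a configuration whose incidence graph has girth $\geq 8$ contains no $2$-fan, and Lemma \ref{lemma:girth} supplies one with $N_0^{1+1/6+o(1)}=N_0^{7/6+o(1)}$ incidences. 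I would nonetheless run the argument below uniformly for all $k\geq 2$.)

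The heart of the matter is to bound the number of $k$-fans in the standard configuration $(P_0,\mathcal{L}_0)$ on $N$ points and $N$ lines. For $k\geq 2$ a $k$-fan is determined by its poles — the point $p_0$ incident to $l_1,\dots,l_k$ and the line $l_0$ through $p_1,\dots,p_k$, both well defined since two distinct lines meet in at most one point and two distinct points lie on at most one line — together with the $k$-element set of lines through $p_0$ meeting $l_0$ in a point of $P_0$. Writing $m(p_0,l_0)$ for the number of such lines, the number of $k$-fans is exactly $\sum_{p_0\notin l_0}\binom{m(p_0,l_0)}{k}$. I claim $m(p_0,l_0)=N^{o(1)}$ for \emph{every} admissible pair, which is a small variant of the divisor computation of Claim \ref{claim:common}: writing $p_0=(a_0,b_0)$ and $l_0:\ y=s_0x+t_0$, a line $y=sx+(b_0-sa_0)\in\mathcal{L}_0$ through $p_0$ meets $l_0$ at $x$-coordinate $a_0+\frac{D}{s-s_0}$ with $D=s_0a_0+t_0-b_0\neq 0$; this is an integer only if $(s-s_0)\mid D$, and as $|D|=O(N^{2/3})$ it has only $N^{o(1)}$ divisors, leaving at most $N^{o(1)}$ admissible slopes $s$. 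Hence the number of $k$-fans in the incidence graph $B_0$ is at most $N^2\binom{N^{o(1)}}{k}=N^{2+o(1)}$.

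Given this, the rest is the deletion scheme of Lemma \ref{lemma:girth}. Keep each point and each line of $(P_0,\mathcal{L}_0)$ independently with probability $q\in(0,1)$, let $B'$ be the induced incidence graph, and let $X$ be the number of $k$-fans in $B'$; since a $k$-fan has $2k+2$ vertices, $\mathbb{E}(X)\leq q^{2k+2}N^{2+o(1)}$. Choose $q$ so that $q^{2k+2}N^{2+o(1)}\leq qN/C$ for a suitably large constant $C$; since the number of $k$-fans is $N^{2+o(1)}$, some $q=N^{-\frac1{2k+1}+o(1)}$ works, and $q>N^{-1/3+\varepsilon}$ because $\frac1{2k+1}<\frac13$ for $k\geq 2$. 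By Chernoff, with probability at least $2/3$ the configuration is ``regular'' in the sense of Lemma \ref{lemma:girth} (sizes $\Theta(qN)$ on each side, all degrees $O(qN^{1/3})$, and $|E(B')|=\Omega(q^2N^{4/3})$), and by Markov, with probability at least $1/2$, $X\leq 2qN/C$. Fixing such a $B'$, delete one vertex from each of its $k$-fans — removing $O(qN/C)\cdot O(qN^{1/3})=O(q^2N^{4/3}/C)$ incidences, a small fraction of $|E(B')|$ — and then pass to uniformly random subsets of size $N_0:=\lfloor qN/8\rfloor$ on each side, which retains a constant fraction of the incidences in expectation. This yields $(P,\mathcal{L})$ with $|P|=|\mathcal{L}|=N_0$, no $k$-fan, and $\Omega(q^2N^{4/3})=N^{4/3-\frac2{2k+1}+o(1)}$ incidences; since $N_0=N^{1-\frac1{2k+1}+o(1)}$, rewriting the exponent in terms of $n=N_0$ gives $\left(\frac43-\frac2{2k+1}\right)\cdot\frac{2k+1}{2k}=\frac{4k-1}{3k}=\frac43-\frac1{3k}$, as desired.

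The only genuinely new ingredient — and the step I expect to need the most care — is the bound $m(p_0,l_0)=N^{o(1)}$ and the resulting $N^{2+o(1)}$ count of $k$-fans; everything else is a line-by-line rerun of Lemma \ref{lemma:girth}. Two small points to check in the write-up: that the $o(1)$ terms (in the divisor bound and in $\binom{N^{o(1)}}{k}\leq N^{o(1)}$) are uniform for fixed $k$, which they are; and that $p_0\notin l_0$ is automatic for any $2k+2$ distinct vertices realizing the $k$-fan incidences (otherwise $l_0=l_1$), so that ``containing a $k$-fan'' is exactly the subgraph condition killed by the deletion. Finally, the same template gives the preceding grid theorem: a $k\times k$ grid is also pinned down by $N^{o(1)}$-fold arithmetic choices at each step (each line of one pencil meets two fixed lines of the other in $N^{o(1)}$ ways, by the same computation), so the standard configuration contains only $N^{(2k+4)/3+o(1)}$ grids, and the identical sparsify-and-delete argument — now with $k^2+2k$ vertices per forbidden structure — yields the claimed $n^{4/3-\Theta(1/k)}$ incidences.
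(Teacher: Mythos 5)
Your proposal is a correct and complete realization of what the paper calls ``a straightforward adaptation of Lemma \ref{lemma:girth}'' (the paper itself gives no further details). The key new ingredient you identify --- that for any $p_0\notin l_0$ in the standard configuration the number $m(p_0,l_0)$ of lines of $\mathcal{L}_0$ through $p_0$ meeting $l_0$ in a point of $P_0$ is $N^{o(1)}$ --- is exactly the right analogue of Claim \ref{claim:common}, and your divisor computation for it is correct: the intersection $x$-coordinate is $a_0+\frac{D}{s-s_0}$ with $D=s_0a_0+t_0-b_0\neq 0$ and $|D|=O(N^{2/3})$, so $(s-s_0)$ is constrained to the $N^{o(1)}$ divisors of $D$. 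This gives $N^{2+o(1)}$ $k$-fans, and then the sparsify-delete-subsample scheme with $q=N^{-\frac{1}{2k+1}+o(1)}$ (note $\frac{1}{2k+1}<\frac13$ so the regularity hypotheses of the Chernoff step are met) gives $\Omega(q^2N^{4/3})$ incidences on $N_0=\Theta(qN)$ points and lines, and the exponent algebra $\bigl(\frac43-\frac{2}{2k+1}\bigr)\cdot\frac{2k+1}{2k}=\frac{4k-1}{3k}=\frac43-\frac1{3k}$ is correct, giving $7/6$ for $k=2$ and $4/3-\Theta(1/k)$ in general. Your side observation that a $2$-fan is precisely a $6$-cycle in the incidence graph (the non-incidence $p_0\notin l_0$ being automatic, since otherwise $l_0=l_1$), so that the $k=2$ case already follows from Lemma \ref{lemma:girth} with parameter $5$, is a nice sanity check and matches the exponent.
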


	\section{Acknowledgements}

	We would like to thank Jacob Fox, Nitya Mani, Abhishek Methuku, J\'anos Pach, Prasanna Ramakrishnan, Benny Sudakov, and Adam Zsolt Wagner for valuable discussions.
	
	The second author was also partially supported by MIPT and the grant of Russian Government N 075-15-2019-1926.

\end{document}